\documentclass[12pt]{article}

\usepackage{amsmath,amsthm,amsfonts,latexsym,amscd,amssymb}

\def\qed{{\hbadness=10000\hfill\ \vbox{\hrule height.09ex
     \hbox{\vrule width.09ex height1.55ex depth.2ex \kern1.8ex
     \vrule width.09ex height1.55ex depth.2ex}\hrule height.09ex}\break
     \bigskip}}
\newtheorem{theorem}{Theorem}[section]
\newtheorem{lemma}[theorem]{Lemma}
\newtheorem{corollary}[theorem]{Corollary}
\newtheorem{proposition}[theorem]{Proposition}

\newtheorem{remark}[theorem]{Remark}

\begin{document}

\title{On the Number of Isomorphism Classes of Transversals}

\author{Vipul Kakkar\footnote{The first author is supported by CSIR, Government of India.}
~ and R.P. Shukla\\
Department of Mathematics, University of Allahabad \\
Allahabad (India) 211 002\\
Email: vplkakkar@gmail.com; shuklarp@gmail.com}
\date{}

\maketitle


\begin{abstract}
In this paper we prove that there does not exist a subgroup $H$ of a finite group $G$ such that the number of isomorphism classes of normalized right transversals of $H$ in $G$ is four.
\end{abstract}
\noindent \textbf{\textit{Mathematical Subject Classification (2010):}}{20D60, 20N05} \\
\noindent \textbf{\textit{Key words:}} Group torsion, Right loop, Normalized right transversal
\section{Introduction and Statement of the Main Result}
Let $G$ be a finite group and $H$ a subgroup of $G$. Let $S$ be a normalized right transversal (NRT) of $H$ in $G$, that is $S$ is a subset of $G$ obtained by choosing one and only one element from each right coset of $H$ in $G$ and $1 \in S$. Then $S$ has an induced binary operation $\circ$ given by $\{x \circ y\}=Hxy \cap S$, with respect to which $S$ is a right loop with identity $1$, that is, a right quasigroup with both sided identity  (see \cite[Proposition 2.2, p.42]{smth},\cite{rltr}). Conversely, every right loop can be embedded as an NRT in a group with some universal property (see \cite[Theorem 3.4, p.76]{rltr}). Let $\langle S \rangle$ be the subgroup of $G$ generated by $S$ and $H_S$ be the subgroup $\langle S \rangle \cap H$. Then  $H_S=\langle \left\{xy(x \circ y)^{-1}|x, y \in S \right\} \rangle$ and $H_{S}S=\langle S \rangle$ (see \cite{rltr}).

Identifying $S$ with the set $H \backslash G$ of all right cosets of $H$ in $G$, we get a transitive permutation representation $\chi_{S}:G\rightarrow Sym(S)$ defined by $\left\{\chi_{S}(g)(x)\right\}=Hxg \cap S, g\in G, x\in S$. The kernal $Ker \chi_S$ of this action is $Core_{G}(H)$, the core of $H$ in $G$.   

Let $G_{S}=\chi_{S}(H_{S})$. This group is known as the \textit{group torsion} of the right loop $S$ (see \cite[Definition 3.1, p.75]{rltr}). The group $G_S$ depends only on the right loop structure $\circ$ on $S$ and not on the subgroup $H$. Since $\chi_S$ is injective on $S$ and if we identify $S$ with $\chi_S(S)$, then $\chi_S(\langle S \rangle)=G_SS$ which also depends only on the right loop $S$ and $S$ is an NRT of $G_S$ in $G_SS$. One can also verify that $ Ker(\chi_S|_{H_SS}: H_SS \rightarrow G_SS)=Ker(\chi_S|_{H_S}: H_S \rightarrow G_S)=Core_{H_SS}(H_S)$ and $\chi_S|_S$=the identity map on $S$. If $H$ is a corefree subgroup of $G$, then there exists an NRT $T$ of $H$ in $G$ which generates $G$ (see \cite{cam}). In this case, $G=H_TT \cong G_TT$ and $H=H_T \cong G_T$. Also $(S, \circ)$ is a group if and only if $G_S$ trivial.

Let $\mathcal{T}(G, H)$ denote the set of all normalized right transversals (NRTs) of $H$ in $G$. We say that $S$ and $T \in \mathcal{T}(G, H)$ are isomorphic (denoted by $S \cong T$), if their induced right loop structures are isomorphic. Let $\mathcal{I}(G,H)$ denote the set of isomorphism classes of NRTs of $H$ in $G$.  
  
In \cite[Main Theorem, p.643]{rjpf}, it is shown that $|\mathcal{I}((G,H))|=1$ if and only if $H\trianglelefteq G$. It is obtained in \cite[Theorem, p. 1718]{viv1} that there is no pair $(G,H)$ such that $|\mathcal{I}(G,H)|=2$. It is easy to observe that if $H$ is a non-normal subgroup of $G$ of index 3, then $|\mathcal{I}(G,H)|=3$. The converse of this statement is proved in \cite[Theorem A, p. 2025]{viv2}. Also, it is shown in \cite[Theorem 3.7, p.2693]{rpsc} that if $T_n$ denotes the number of non-isomorphic right loops of order $n$, then $\left|\mathcal{I}(Sym(n),Sym(n-1)) \right|=T_n$, where $Sym(m)$ denotes the symmetric group on $m$ symbols. Moreover, if there is any pair $(G,H)$ such that the index $[G:H]$ of $H$ in $G$ is $n$ and if $|\mathcal{I}(G,H)|=T_n$, then there is a surjective homomorphism $\psi:G\rightarrow Sym(n)$ such that $\psi(H)=Sym(n-1)$ and $\psi^{-1}(Sym(n-1))=H$ (see \cite[Proposition 3.8, p.2694]{rpsc}).

Let $Aut_{H}G$ denote the group of all automorphisms of $G$ taking $H$ onto $H$.
The group $Aut_HG$ acts on each isomorphism class in $\mathcal{T}(G,H)$. Thus the number of non-isomorphic right loops is at most the number of orbits of the action of $Aut_{Sym(n-1)}Sym(n)$ on $\mathcal{T}(Sym(n),Sym(n-1))$. Clearly $Core_{Sym(n)}(Sym(n-1))=\left\{1\right\}$. For $n \neq 6, Aut Sym(n)=Inn (Sym(n)) \cong Sym(n)$ (see \cite[Proposition 2.18, p.300]{suz}) and $Aut Sym(6)=Inn(Sym(6)) \rtimes C_2$ (see \cite[Proposition 2.19, p.300]{suz}), where $C_n$ denotes the cyclic group of order $n$. It can be checked that $Aut_{Sym(n-1)}Sym(n) \cong Sym(n-1) \leq Inn(Sym(n))$ for all $n$. It follows from the proof of \cite[Theorem 3.7, p. 2693]{rpsc} that binary operations of $S$ and $T$ define an element $\alpha \in Sym(n)$  such that $\alpha(1)=1$ and $\alpha S \alpha^{-1}=T$. Which means that the number of orbits of the action of $Aut_{Sym(n-1)}Sym(n)$ on $\mathcal{T}(Sym(n),Sym(n-1))$ is precisely the number of isomorphism classes in $\mathcal{T}(Sym(n),Sym(n-1))$. The same is true for the pair $(Alt(n),Alt(n-1))$, where  $Alt(m)$ denotes the alternating group of degree $m$ (since $Aut(Alt(n))\cong Aut(Sym(n))$, $Aut_{Alt(n-1)}Alt(n) \cong Sym(n-1) \leq Inn(Sym(n)))$.

Using GAP (\cite{gap}), we have calculated the number of orbits of the action by conjugation of $Sym(n-1)$ on $\mathcal{T}(Sym(n), Sym(n-1))$ for $n=4$ and $5$. These are 44 and 14022 respectively. In \cite{viv3}, an explicit formula for the number of orbits of the conjugation action of $Sym(n-1)$ on $\mathcal{T}(Sym(n),Sym(n-1))$ has been obtained.  
If $H$ has non-trivial core, then the number of $Aut_HG$-orbits in $\mathcal{T}(G,H)$ may be different from the number $|\mathcal{I}(G,H)|$. For example, let $G_1= Sym(4)$ and $H_1=\langle \{(1,3),(1,2,3,4)\} \rangle \cong D_8$, where $D_{2n}$ denotes the dihedral group of order $2n$. Then NRTs $\{I,(3,4),(2,3)\}$, $\{I,(3,4),(2,3,4)\}$, $\{I,(3,4),(1,2,3,4)\}$ and $\{I,(2,4,3),(2,3,4)\}$ to $H_1$ in $G_1$, where $I$ is identity permutation, lie in different orbits of $Aut_{H_{1}}G_{1}$ (as the set of orders of group elements in any two NRTs are not same). However, since $H_1$ is a non-normal subgroup of $G_1$ of index 3, $\left|\mathcal{I}(G_1,H_1)\right|=3$.

Let $N=Core_G(H)$. Clearly $L \mapsto \nu (L)=\{Nx~|~x\in L\}$, where $\nu$ is the quotient map from $G$ to $G/N$, is a surjective map from $\mathcal{T}(G,H)$ to $\mathcal{T}(G/N,H/N)$ such that the corresponding NRTs are isomorphic. 


 Let $X$ denote the set of all pairs $(G,H)$, where $G$ is a finite group and $H$ a subgroup of $G$. In view of the above discussion, it seems an interesting problem to find the image set and the inverse image set of the map $\varphi: X \rightarrow \mathbb{N}$ defined by $\varphi((G,H))=\left|\mathcal{I}(G,H)\right|$.

In this paper, we prove that $4\notin Image(\varphi)$, that is, we prove the following theorem:

\begin{theorem}[Main Theorem]\label{mt}
Let $G$ be finite group and $H$ be a subgroup of $G$. Then $|\mathcal{I}(G,H)| \neq 4$.
\end{theorem}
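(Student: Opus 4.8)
The plan is to proceed by contradiction, assuming there exists a pair $(G,H)$ with $|\mathcal{I}(G,H)|=4$, and to choose such a pair with $|G|$ minimal. The first reduction is to pass to a corefree situation: since the quotient map $\nu: G \to G/N$ (where $N = Core_G(H)$) induces a surjection $\mathcal{T}(G,H) \to \mathcal{T}(G/N, H/N)$ preserving right-loop isomorphism, we have $|\mathcal{I}(G/N, H/N)| \geq |\mathcal{I}(G,H)| = 4$. Combined with minimality of $|G|$, this should force $N = \{1\}$, so we may assume $H$ is corefree in $G$. In particular, by the results quoted in the introduction, $H$ is non-normal (else the count is $1$) and we may work with the faithful transitive action $\chi_S: G \to Sym(S)$ identifying $S$ with $H\backslash G$.

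The next step is to control the index $n = [G:H]$. I would first rule out small indices: $n=2$ forces $H \trianglelefteq G$ (count $1$), and $n=3$ with $H$ non-normal gives count exactly $3$ by \cite[Theorem A]{viv2}, not $4$. So $n \geq 4$. Here the key tool is the ``universal'' embedding: since every right loop of order $n$ embeds as an NRT in $(Sym(n), Sym(n-1))$, the count $|\mathcal{I}(G,H)|$ is bounded above by $T_n$, the number of non-isomorphic right loops of order $n$, with equality characterizing the symmetric-group pair via \cite[Proposition 3.8]{rpsc}. More usefully, each isomorphism class of NRTs of $H$ in $G$ determines a distinct right-loop isomorphism type realizable in $Sym(n)$, so I would try to show that for the count to be as small as $4$, the index $n$ cannot be too large. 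Indeed, the trivial (group) right-loop structure always occurs, and one can typically exhibit several genuinely non-associative transversals whenever $n \geq 4$; the aim is to prove that $n \geq 4$ already forces at least five isomorphism classes, or else pin $n$ down to a very short list of candidate indices.

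The heart of the argument will then be a finite case analysis over the surviving indices. For each candidate index $n$ (likely $n=4$ and possibly $n=5$), and each corefree pair $(G,H)$ of that index — so $G$ embeds as a transitive subgroup of $Sym(n)$ with point stabilizer $H$ — I would enumerate the possible right-loop structures on the $n$-element transversals and count isomorphism classes, showing the total is never exactly $4$. For $n=4$ the transitive groups are few (subgroups of $Sym(4)$ acting transitively on $4$ points: $C_4$, $V_4$, $D_8$, $Alt(4)$, $Sym(4)$), and for each the corefree condition plus an explicit transversal analysis should yield a count that is provably $\neq 4$; the GAP computation quoted in the introduction (giving $44$ classes for the full $(Sym(4),Sym(3))$ pair) bounds the largest case and suggests the intermediate pairs produce counts well away from $4$.

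The main obstacle I anticipate is establishing a clean upper bound on the index $n$ that reduces the problem to finitely many pairs — that is, proving that large index is incompatible with a count as low as $4$. The difficulty is that $|\mathcal{I}(G,H)|$ can be genuinely small even when $[G:H]$ is large (for instance when $H$ is ``close to normal''), so a naive counting bound of the form ``$|\mathcal{I}(G,H)| \geq f(n)$ with $f$ increasing'' is false in general. The real work will be to combine the corefree reduction with structural constraints on $G_S$ (the group torsion) — in particular exploiting that $(S,\circ)$ is a group precisely when $G_S$ is trivial, so every non-group transversal contributes a nontrivial torsion — to produce, for any corefree non-normal pair of index $\geq 4$, an explicit list of at least five pairwise non-isomorphic right-loop structures, thereby closing off all indices except the finitely many handled by direct computation.
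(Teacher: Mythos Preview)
Your overall plan has a genuine gap at precisely the point you yourself identify: there is no mechanism by which $|\mathcal{I}(G,H)|=4$ forces a bound on $[G:H]$, and the paper does not proceed this way. Indeed, the paper explicitly states that its argument unavoidably relies on the classification of finite simple groups, so a reduction to a finite case analysis over transitive subgroups of $Sym(n)$ for small $n$ cannot be made to work.

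The paper's route is structurally quite different. After the corefree reduction (Proposition~\ref{p1}(i)), the bulk of the work is a chain of structural constraints on a minimal counterexample $(G,H)$: every $S\in\mathcal{T}(G,H)$ generates $G$ (Proposition~\ref{prn1}); every such $S$ is indecomposable as a right loop (Proposition~\ref{prn2}); $G$ itself is indecomposable (Proposition~\ref{prn3}); and finally $G$ is characteristically simple, hence simple (Proposition~\ref{p5}, Corollary~\ref{p5c}). None of these steps bounds $[G:H]$; they instead force $G$ to be a finite non-abelian simple group. The contradiction then comes from a counting argument (Proposition~\ref{p10}): for each isomorphism class $\mathcal{A}\subseteq\mathcal{T}(G,H)$, transitivity of $Aut_HG$ on $\mathcal{A}$ (Proposition~\ref{p1'}) gives $|\mathcal{A}|\leq|Aut_HG|$, and bounds on $|Aut\,G|$ for non-abelian simple $G$ (this is where the classification enters, via \cite[Proposition 3.4]{viv2}) yield $|\mathcal{A}|<m^{n-1}/4$, where $m=|H|$ and $n=[G:H]$. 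Summing over the four classes gives $m^{n-1}=|\mathcal{T}(G,H)|<4\cdot m^{n-1}/4$, a contradiction.

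A smaller point: in your corefree reduction, the inequality $|\mathcal{I}(G/N,H/N)|\geq|\mathcal{I}(G,H)|$ alone does not combine with minimality of $|G|$ to force $N=\{1\}$: you would only obtain $|\mathcal{I}(G/N,H/N)|\geq 4$, not $=4$, so $(G/N,H/N)$ need not be a smaller counterexample. The correct observation is that, because $L\cong\nu(L)$ for every $L$ and $\nu$ is surjective on transversals, the sets of right-loop isomorphism types realized in $(G,H)$ and in $(G/N,H/N)$ are identical, so $|\mathcal{I}(G/N,H/N)|=|\mathcal{I}(G,H)|=4$; then minimality of $|G|$ forces $N=\{1\}$.
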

The proof of the Theorem \ref{mt} is based by the method of contradiction and essentially uses the same techniques of \cite{rjpf}. Assuming the falsity of the result, we can find a pair $(G,H)$, to be called as a \textit{minimal counterexample} such that
\smallskip

\noindent \textbf{(i)}  $|G|$ is minimal.\\
\noindent \textbf{(ii)} the index $[G:H]$ is minimal and\\
\noindent \textbf{(iii)}$|I(G, H)| = 4$.

We will study various properties of a minimal counterexample and come to the case of a finite non-abelian simple group. With the knowledge of the order of automorphism groups of finite non-abelian simple groups, we will derive a contradiction. Unfortunately, we do not have an alternate proof of the Main Theorem where the use of the classification of finite simple groups could be avoided. However in \cite{vk}, we now have a short proof of the \cite[Main Theorem, p.643]{rjpf} where the classification of finite simple groups could be avoided. 


\section{Properties of a Minimal Counterexample}
\begin{proposition}\label{p1} Let $(G,H)$ be a minimal counterexample. Then 

(i) $Core_G(H)=\left\{1\right\}$,

(ii) if $S \in \mathcal{T}(G,H)$ such that $\langle S \rangle = G$, then there exists an isomorphism $f:G \rightarrow G_SS$ which takes $H$ onto $G_S$ and fixes $S$ elementwise,
 
(iii) if $S \in \mathcal{T}(G,H)$ such that $\langle S \rangle \neq G$, then $H_S=\left\{1\right\}.$  
\end{proposition}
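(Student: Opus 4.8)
The plan is to exploit minimality in the two independent directions built into the definition of a minimal counterexample — minimal $|G|$ and, among those, minimal index $[G:H]$ — using the quotient map $\nu$ and the group torsion machinery recalled in the introduction. For part (i), I would argue that if $N=Core_G(H)$ were nontrivial, then the surjection $L\mapsto\nu(L)$ from $\mathcal{T}(G,H)$ to $\mathcal{T}(G/N,H/N)$ carries each NRT to an isomorphic one, so it descends to a surjection on isomorphism classes and hence $|\mathcal{I}(G/N,H/N)|\le|\mathcal{I}(G,H)|=4$. The reverse inequality should also hold (lifting transversals through $\nu$ preserves the right-loop structure), forcing $|\mathcal{I}(G/N,H/N)|=4$. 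But $(G/N,H/N)$ is a strictly smaller pair with $|G/N|<|G|$, contradicting the minimality of $|G|$ unless $N=\{1\}$. The only thing to check carefully is that $\nu$ really induces a \emph{bijection} (not merely a surjection) on isomorphism classes, or at least that the value $4$ is preserved; this is where I would lean on the isomorphism-of-induced-structures clause in the definition of $\nu$ and on the universal embedding of right loops quoted from \cite{rltr}.

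For part (ii), suppose $S\in\mathcal{T}(G,H)$ generates $G$. The introduction already records that $\chi_S$ restricted to $S$ is the identity, that $\chi_S(\langle S\rangle)=G_SS$, and that $S$ is an NRT of $G_S$ in $G_SS$; moreover $Ker(\chi_S|_{H_SS})=Core_{H_SS}(H_S)$. Since $\langle S\rangle=G$ we have $H_SS=G$ and $G_SS=\chi_S(G)$, so the map I want is exactly $f=\chi_S$, and I must show it is an \emph{isomorphism} fixing $S$ pointwise and sending $H$ onto $G_S$. The kernel of $\chi_S$ on $G$ is $Core_G(H)$, which is trivial by part (i); hence $\chi_S$ is injective on $G$, and being surjective onto $G_SS=\chi_S(G)$ it is an isomorphism. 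It fixes $S$ elementwise because $\chi_S|_S$ is the identity, and it sends $H=H_S$ (as $H_SS=G$ forces $H_S=H$) onto $G_S=\chi_S(H_S)$. So part (ii) is essentially a repackaging of the facts already stated, the real content being the appeal to (i) to kill the kernel.

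Part (iii) is the substantive one. Suppose $S\in\mathcal{T}(G,H)$ with $\langle S\rangle=K\ne G$, and set $H_S=\langle S\rangle\cap H=K\cap H$. Then $S$ is an NRT of $H_S$ in $K=H_SS$ inducing the \emph{same} right-loop structure as it does in $(G,H)$, because the induced operation $\{x\circ y\}=Hxy\cap S$ agrees with $\{x\circ y\}=H_S xy\cap S$ whenever $x,y\in S\subseteq K$ (both cosets meet $S$ in the same point). The idea is to transfer the problem to the strictly smaller pair $(K,H_S)$ and invoke minimality of $|G|$. If $H_S\ne\{1\}$, then $|K|=|H_S|\,|S|>|S|=[G:H]$; I would like to conclude $|\mathcal{I}(K,H_S)|=4$ and derive a contradiction from $|K|<|G|$. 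The main obstacle, and the step I expect to require genuine work, is showing that the transversals of $H_S$ in $K$ realize exactly the same four isomorphism classes as those of $H$ in $G$ — equivalently, that every right loop arising from $(G,H)$ is already carried by a subtransversal generating a proper subgroup, and conversely — so that $(K,H_S)$ becomes a smaller counterexample. This likely needs the core-freeness from (i) together with the observation that the group-torsion $G_S$ depends only on the right-loop structure of $S$ and not on the ambient group, letting one match isomorphism classes across the two pairs; handling the degenerate possibility $K=H_S\cdot S$ with $S$ not generating, and ruling out $H_S\ne\{1\}$, is the crux.
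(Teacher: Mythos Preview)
Your treatment of part (ii) matches the paper's exactly: $f=\chi_S$, with injectivity coming from (i). For part (i), your approach is correct and in fact cleaner than the paper's. Since $L\cong\nu(L)$ for every $L\in\mathcal{T}(G,H)$, the assignment $[L]\mapsto[\nu(L)]$ is not just well-defined and surjective but also injective (if $\nu(L)\cong\nu(L')$ then $L\cong\nu(L)\cong\nu(L')\cong L'$), so $|\mathcal{I}(G/N,H/N)|=4$ and $|G/N|<|G|$ already contradicts minimality. The paper instead only extracts $|\mathcal{I}(G/N,H/N)|<4$ and then eliminates the values $1,2,3$ by separate appeals to \cite{rjpf}, \cite{viv1}, \cite{viv2}; your route avoids all three citations here.

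Part (iii), however, has a genuine gap, and the approach you sketch is misdirected. You aim to show that, assuming $H_S\neq\{1\}$, the smaller pair $(K,H_S)$ with $K=\langle S\rangle$ again satisfies $|\mathcal{I}(K,H_S)|=4$. There is no reason this should hold: it would require every isomorphism class realized in $\mathcal{T}(G,H)$ to already be realized inside $\mathcal{T}(K,H_S)$, and nothing in the setup forces transversals that generate $G$ to be isomorphic to ones living inside the proper subgroup $K$. In fact the conclusion of the proposition is that $H_S=\{1\}$, i.e.\ $K=S$ is a group and $|\mathcal{I}(K,H_S)|=1$, so the value you are trying to establish is simply false.

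The paper's argument runs in the opposite direction. From $\mathcal{T}(K,H_S)\subseteq\mathcal{T}(G,H)$ one gets $|\mathcal{I}(K,H_S)|\le 4$; minimality of $|G|$ then forces $|\mathcal{I}(K,H_S)|\le 3$. The value $2$ is excluded by \cite{viv1}, and the value $3$ is excluded because $[K:H_S]=[G:H]\neq 3$ (else $|\mathcal{I}(G,H)|=3$) together with \cite{viv2}. Hence $|\mathcal{I}(K,H_S)|=1$, so $H_S\trianglelefteq K$ by \cite{rjpf}; then $H_S=Core_K(H_S)=Ker(\chi_S|_K)\subseteq Core_G(H)=\{1\}$. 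The key idea you are missing is that one should use the inclusion of transversal sets to bound $|\mathcal{I}(K,H_S)|$ \emph{from above}, not to push it up to $4$, and then feed that bound into the earlier classification results for the values $1,2,3$.
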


\begin{proof} (i) Let $N=Core_G(H)$. Assume that $N \neq \left\{1\right\}$. Since the quotient map $\nu:G \rightarrow G/N$ induces a surjective correspondence between $\mathcal{T}(G,H)$ and $\mathcal{T}(G/N,H/N)$ such that the corresponding right loops are isomorphic, $|\mathcal{I}(G/N,H/N)|<4$, for $(G,H)$ is a minimal counterexample.

If $|\mathcal{I}(G/N,H/N)|=1$, then $H/N \trianglelefteq G/N$ (see \cite[Main Theorem, p.643]{rjpf}) and so $H \trianglelefteq G$, a contradiction. By \cite[Theorem, p. 1718]{viv1}, $\left|\mathcal{I}(G,H)\right| \neq 2$. If $|\mathcal{I}(G/N,H/N)|=3$, then $[G:H]=[G/N:N/H]=3$ (see \cite[Theorem A, p.2025]{viv2}). But in this case, $|\mathcal{I}(G,H)|=3$, which is a contradiction.

(ii) Let $S \in \mathcal{T}(G,H)$ such that $\langle S \rangle=G$. Then $H_S=H$. Let $\chi_{S}:H_SS\rightarrow G_SS$ be the surjective homomorphism defined as in the second paragraph of the Section $1$. Then $Ker \chi_{S}=\{1\}$ (by (i)). Hence $\chi_{S}$ is an isomorphism which takes $H$ onto $G_S$ and fixes $S$ elementwise.

(iii) Let $S \in \mathcal{T}(G,H)$. Assume that $H_SS=\langle S \rangle \neq G$. Since $\mathcal{T}(\langle S \rangle, H_S) \subseteq \mathcal{T}(G,H)$ and $(G,H)$ is a minimal counterexample, $|\mathcal{I}(\langle S \rangle, H_S)|<4$. By \cite[Theorem, p. 1718]{viv1}, $\left|\mathcal{I}(H_SS,H_S)\right| \neq 2$ and by \cite[Theorem A, p.2025]{viv2}, $\left|\mathcal{I}(H_SS,H_S)\right| \neq 3$. Hence $\left|\mathcal{I}(\langle S \rangle,H_S)\right| = 1$. Thus, by \cite[Main Theorem, p.643]{rjpf} $H_S \trianglelefteq \langle S \rangle$. Let $\chi_S: H_SS \rightarrow G_SS$ be the map defined as in the second paragraph of the Section $1$. Then $H_S \subseteq Ker \chi_S \subseteq Core_G(H)=\{1\}$. Hence $H_S=\{1\}$, that is $S$ is a subgroup of $G$.  
\end{proof}
\begin{proposition}\label{p1'} Let $H$ be a corefree subgroup of a finite group $G$. Let $S \in \mathcal{T}(G,H)$ such that $\langle S \rangle=G$. Then $Aut_HG$ acts transitively on the set $\{T \in \mathcal{T}(G,H)|T \cong S \}$.
\end{proposition}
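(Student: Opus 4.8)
The plan is to produce, for each $T$ in the isomorphism class of $S$, an automorphism $\alpha \in Aut_HG$ with $\alpha(S)=T$, built by transporting the permutation representation attached to $S$ onto that attached to $T$ along a right loop isomorphism. First I would record that every such $T$ also generates $G$: since $G_S$ depends only on the right loop structure, a right loop isomorphism $\theta\colon (S,\circ)\to(T,\circ)$ yields $|G_T|=|G_S|$, whence $|\langle T\rangle|=|G_T||T|=|G_S||S|=|\langle S\rangle|=|G|$, so $\langle T\rangle=G$ and $H_T=H$. Because $H$ is corefree, $Ker\,\chi_S=Ker\,\chi_T=Core_G(H)=\{1\}$, so both $\chi_S$ and $\chi_T$ are injective and identify $G$ with the subgroups $\chi_S(G)=G_SS\leq Sym(S)$ and $\chi_T(G)=G_TT\leq Sym(T)$.

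The central step is to use $\theta$ to match these two copies of $G$ inside the symmetric groups. Conjugation by $\theta$ defines a group isomorphism $\hat\theta\colon Sym(S)\to Sym(T)$, $\sigma\mapsto\theta\sigma\theta^{-1}$. Since $\chi_S(x)$ is the right translation $y\mapsto y\circ x$ and $\theta$ preserves $\circ$, a direct check gives $\hat\theta(\chi_S(x))=\chi_T(\theta(x))$ for every $x\in S$; that is, $\hat\theta$ carries the right translations of $S$ onto those of $T$. As $\langle S\rangle=\langle T\rangle=G$, these translations generate $\chi_S(G)$ and $\chi_T(G)$ respectively, so $\hat\theta$ restricts to an isomorphism $\chi_S(G)=G_SS\to G_TT=\chi_T(G)$. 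Moreover $\theta(1)=1$, and $G_S$ is exactly the stabiliser of $1$ in $G_SS$ (it fixes $1$ and has the correct order $|G_SS|/|S|$), so $\hat\theta$ sends $G_S$ onto $G_T$, i.e. $\hat\theta(\chi_S(H))=\chi_T(H)$.

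I would then set $\alpha=\chi_T^{-1}\circ\hat\theta\circ\chi_S$. By the previous paragraph this is a well-defined automorphism of $G$. For $x\in S$ one computes $\alpha(x)=\chi_T^{-1}(\chi_T(\theta(x)))=\theta(x)$, so $\alpha(S)=\theta(S)=T$, and $\alpha(H)=\chi_T^{-1}(\hat\theta(G_S))=\chi_T^{-1}(G_T)=H$. Hence $\alpha\in Aut_HG$ with $\alpha(S)=T$, and since $T$ was an arbitrary member of the isomorphism class of $S$, the action of $Aut_HG$ on that class is transitive. I expect the only delicate point to be the passage through the symmetric groups: one must verify that conjugation by the single bijection $\theta$ simultaneously matches the transversals ($\chi_S(S)\leftrightarrow\chi_T(T)$) and the group torsions ($G_S\leftrightarrow G_T$), which ultimately rests on the intrinsic, right-loop-only nature of $G_S$ recalled in Section~1. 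The observation that $\chi_S$ and $\chi_T$ are injective on all of $G$, not merely on $S$, is what makes $\alpha$ a genuine automorphism, and this is precisely where corefreeness of $H$ enters.
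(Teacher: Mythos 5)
Your proof is correct, and in substance it is the same argument the paper relies on: the paper's own ``proof'' of this proposition is just a citation to the first paragraph of the proof of \cite[Proposition 2.7, p.652]{rjpf}, and what you have written is a self-contained reconstruction of exactly that mechanism (transport the right loop isomorphism $\theta$ through the faithful representations $\chi_S$, $\chi_T$, match the group torsions, and pull back to obtain $\alpha \in Aut_HG$ with $\alpha(S)=T$).

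One small repair is needed in your first step. The equality $|\langle T\rangle|=|G_T||T|$ is not available a priori: it amounts to injectivity of $\chi_T$ on $\langle T\rangle$, i.e. $Core_{\langle T\rangle}(H_T)=\{1\}$, which does not follow from corefreeness of $H$ in $G$ while $\langle T\rangle$ might still be a proper subgroup --- it is part of what you are trying to prove. The fix is one line: since $G_T=\chi_T(H_T)$ is a homomorphic image of $H_T$, you have $|\langle T\rangle|=|H_T||T|\geq |G_T||T|=|G_S||S|=|G|$, which forces $\langle T\rangle=G$, hence $H_T=H$, and only then the injectivity of $\chi_T$ on all of $G$ that you use later. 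With that adjustment the rest checks out: the identity $\hat\theta(\chi_S(x))=\chi_T(\theta(x))$ for $x\in S$, the identification of $G_S$ and $G_T$ as the stabilizers of $1$ (using $\theta(1)=1$), and the verification that $\alpha=\chi_T^{-1}\circ\hat\theta\circ\chi_S$ lies in $Aut_HG$ and carries $S$ onto $T$, from which transitivity on the isomorphism class follows.
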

\begin{proof} The proof follows from the first paragraph of the proof of \cite[Proposition 2.7, p.652]{rjpf}.
\end{proof}
\begin{proposition}\label{p2} Let $(G,H)$ be a minimal counterexample. Let $N$ be a proper $Aut_HG$-invariant subgroup of $G$ containing $H$ properly. 
Then there exists $S \in \mathcal{T}(G,H)$ such that $S \neq TL$ for any $T\in \mathcal{T}(N,H)$ and $L \in \mathcal{T}(G,N)$.
\end{proposition}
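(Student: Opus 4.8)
The plan is to reduce the statement to a purely combinatorial count of transversals. The key observation is that, since $H \le N$, the subgroup $N$ is a union of right cosets of $H$, so for \emph{any} $S \in \mathcal{T}(G,H)$ the intersection $T := S \cap N$ is automatically an element of $\mathcal{T}(N,H)$. Moreover, if $S = T'L$ with $T' \in \mathcal{T}(N,H)$ and $L \in \mathcal{T}(G,N)$, then intersecting with $N$ forces $T' = S \cap N = T$ (because $t'\ell \in N$ only when $\ell = 1$), and for each right coset $Nc$ of $N$ in $G$ one gets $S \cap Nc = T\ell_c$ where $\ell_c = L \cap Nc$. Conversely, if $S \cap Nc$ is a right translate of $T = S\cap N$ for every coset $Nc$, then collecting these translating elements yields an $L \in \mathcal{T}(G,N)$ with $S = TL$. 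Thus I would first prove the clean characterization: $S$ is of the form $TL$ if and only if $S \cap Nc$ is a right translate (by some element of $Nc$) of $S \cap N$ for every right coset $Nc$ of $N$ in $G$.

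With this in hand, the construction of a non-product transversal becomes local. I would fix any $T \in \mathcal{T}(N,H)$, declare $S \cap N = T$, and try to choose $S \cap Nc_0$ on a single coset $Nc_0 \neq N$ so that it is a valid selection (one element from each of the $[N:H]$ right cosets of $H$ inside $Nc_0$) but is \emph{not} a right translate of $T$; on the remaining cosets $S$ may be filled in arbitrarily. The number of valid selections on $Nc_0$ is $|H|^{[N:H]}$, whereas the number of right translates of $T$ inside $Nc_0$ is $|N|/|M_T|$, where $M_T = \{\,m \in N : Tm = T\,\}$ is a subgroup of $N$. A non-product $S$ therefore exists as soon as
\[
|M_T|\,|H|^{[N:H]-1} > [N:H]
\]
for some choice of $T$, since then strictly more selections are available than there are translates.

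Because $N$ contains $H$ properly and $G$ contains $N$ properly, we have $[N:H] \ge 2$ and a coset $Nc_0 \neq N$ is available. Using $|M_T| \ge 1$, the displayed inequality holds whenever $|H|^{[N:H]-1} > [N:H]$, which is true in every case except $|H| = 2$ and $[N:H] = 2$. I expect this degenerate case, where $|N| = 4$ and $H$ has order $2$, to be the main obstacle. Here the inequality is an equality when $M_T$ is trivial, and a direct check shows that a non-translate selection exists precisely when $N$ possesses an involution outside $H$, i.e. when $N \cong C_2 \times C_2$ (taking $T = \{1,t\}$ with $t^2 = 1$ gives $|M_T| = 2$); when $N \cong C_4$ with $H$ its unique subgroup of order $2$, \emph{every} transversal is in fact a product, so the conclusion can only be rescued from the hypotheses on the pair. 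The final, and most delicate, step is therefore to rule out the configuration $N \cong C_4$, $H \cong C_2$ in a minimal counterexample: here I would exploit that $N$ is $Aut_HG$-invariant and that $H$ is characteristic in $N \cong C_4$ (so that $N_G(H) = N_G(N)$ and $N \le C_G(H)$), together with $Core_G(H) = \{1\}$ from Proposition~\ref{p1}(i) and the minimality of $(G,H)$, to reach a contradiction and thereby complete the proof.
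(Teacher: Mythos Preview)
Your counting reduction to the degenerate configuration $|H|=2$, $[N:H]=2$, $N\cong C_4$ is essentially the paper's argument recast locally; the paper phrases it as the global inequality $|\mathcal{T}(G,H)|\le|\mathcal{T}(N,H)|\,|\mathcal{T}(G,N)|$ and invokes \cite[Lemma~2.5]{viv1}, but the arithmetic is the same, and you correctly observe that for $N\cong C_4$ every transversal really is a product, so the conclusion must come from the minimal-counterexample hypotheses rather than from a construction.

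The gap is in that last step. The facts you propose to exploit --- $H$ characteristic in $N$, $N_G(H)=N_G(N)$, $N\le C_G(H)$, $Core_G(H)=\{1\}$ --- do not by themselves produce a contradiction: they show only that $N$ is not normal in $G$, which is perfectly compatible with $N$ being merely $Aut_HG$-invariant. You have not indicated how minimality enters. The paper's resolution is quite different and more concrete: from the equality $|\mathcal{T}(G,H)|=|\mathcal{T}(N,H)|\,|\mathcal{T}(G,N)|$ and the argument of \cite[Lemma~2.5]{viv1} one extracts the additional conclusion $|\mathcal{I}(G,N)|\le 3$. Then $|\mathcal{I}(G,N)|=1$ would make $N\trianglelefteq G$, hence $H\trianglelefteq G$; the value $2$ is impossible by \cite{viv1}; and $|\mathcal{I}(G,N)|=3$ forces $[G:N]=3$ by \cite{viv2}, so $|G|=12$, and the only group of order $12$ with a $C_4$ subgroup and a non-normal subgroup of order $2$ would have to be $C_3\rtimes C_4$, which in fact has a unique (hence normal) involution. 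To complete your proof along your own lines you would need either to recover the bound on $|\mathcal{I}(G,N)|$ or to find a genuinely different structural obstruction in the $N\cong C_4$ case; as written, the proposal stops short of either.
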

\begin{proof} Assume that each $S \in \mathcal{T}(G,H)$ can be written as $S=TL$, for some $T\in \mathcal{T}(N,H)$ and $L \in \mathcal{T}(G,N)$. Then $\left|\mathcal{T}(G,H)\right| \leq \left|\mathcal{T}(N,H)\right|\left|\mathcal{T}(G,N)\right|$. Keeping the same lines as in the proof of \cite[Lemma 2.5, p.1720]{viv1}, we observe that $|H|=2, N \cong C_4$ and $|\mathcal{I}(G,N)| \leq 3$. If $|\mathcal{I}(G,N)|=1$, then $N \trianglelefteq G$ (see \cite[Main Theorem, p.643]{rjpf}). But then this implies $H \trianglelefteq G$, a contradiction. Also, by \cite[Theorem, p. 1718]{viv1} $\left|\mathcal{I}(G,N)\right| \neq 2$.

Thus $|\mathcal{I}(G,N)|=3$. Then $[G:N]=3$ (see \cite[Theorem A, p.2025]{viv2}). This means that $|G|=12$ and $G$ contains a cyclic subgroup of order 4. By the classification of non-abelian groups of order 12, the only choice for $G$ is $G \cong C_3 \rtimes C_4$. But $C_3 \rtimes C_4$ has a unique subgroup of order 2, hence normal in $G$. This is again a contradiction.
\end{proof}
\begin{corollary}\label{p2c} Let $(G,H)$ be a minimal counterexample. Let $N$ be a proper $Aut_HG$-invariant subgroup of $G$ containing $H$ properly. Let $K \in \mathcal{T}(N,H)$. Then there exists $S \in \mathcal{T}(G,H)$ containing $K$ such that $S\neq KL$ for any $L \in \mathcal{T}(G,N)$.
\end{corollary}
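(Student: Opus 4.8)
The plan is to argue by contradiction, carrying out the counting of Proposition~\ref{p2} \emph{locally}, with the first factor pinned to the given $K$. The starting point is the elementary remark that a product transversal remembers its first factor: if $L\in\mathcal{T}(G,N)$ then, as $L$ contains exactly one representative of each coset of $N$ and is normalized, $L\cap N=\{1\}$, whence $(KL)\cap N=K$. Since an $S\in\mathcal{T}(G,H)$ satisfies $K\subseteq S$ exactly when $S\cap N=K$ (both sets being NRTs of $H$ in $N$ of the same cardinality $[N:H]$), this says that the products $KL$ all lie in the set $A=\{S\in\mathcal{T}(G,H)\mid S\cap N=K\}$. Writing $P=\{KL\mid L\in\mathcal{T}(G,N)\}$, we have $P\subseteq A$, and the conclusion to be proved is precisely $A\setminus P\neq\emptyset$.

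First I would suppose, for contradiction, that $A=P$ and compare cardinalities. An element of $A$ is produced by retaining $K$ on the $[N:H]$ cosets of $H$ lying in $N$ and choosing one representative freely in each of the remaining $[G:H]-[N:H]$ cosets, none of which is the identity coset; hence $|A|=|H|^{[G:H]-[N:H]}$. Since $A=P$, also $|A|=|P|\le|\mathcal{T}(G,N)|=|N|^{[G:N]-1}$. Putting $m=[G:N]$ and $k=[N:H]$, both $\ge 2$ since $H<N<G$, and using $[G:H]=mk$ and $|N|=k|H|$, this reads $(|H|^{k-1})^{m-1}\le k^{m-1}$, that is $|H|^{k-1}\le k$. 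As $(G,H)$ is a minimal counterexample, $H$ is non-normal and so nontrivial, giving $|H|\ge 2$ and therefore $|H|^{k-1}\ge 2^{k-1}\ge k$. Equality throughout is thus forced, which happens only for $|H|=2$ and $k=2$; hence $|N|=4$.

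This last configuration is exactly the one already reached in the proof of Proposition~\ref{p2}, so its analysis applies without change and yields a contradiction: there one finds $N\cong C_4$ and $|\mathcal{I}(G,N)|\le 3$, the values $1$ and $2$ are impossible (the first would make $N$, and hence $H$, normal by \cite{rjpf}; the second is excluded by \cite{viv1}), so $|\mathcal{I}(G,N)|=3$ and therefore $[G:N]=3$ by \cite{viv2}. Then $|G|=12$ and $G\cong C_3\rtimes C_4$, a group whose unique subgroup of order $2$ is normal; this forces $H\trianglelefteq G$, contradicting $|\mathcal{I}(G,H)|=4$. Hence $A=P$ is impossible and the required $S$ exists.

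The counting itself is routine; the only genuine obstacle is the boundary case $|H|=2$, $|N|=4$, in which the crude estimate gives merely $|A|\le|P|$ rather than the strict inequality that would finish the proof outright. What rescues the argument is that this is verbatim the configuration disposed of in Proposition~\ref{p2}, whose structural input (the classification of groups of order $12$ and the uniqueness of the order-$2$ subgroup of $C_3\rtimes C_4$) can simply be cited. Indeed the present statement is nothing but the $K$-fixed, ``local'' refinement of Proposition~\ref{p2}: summing the value $|A|$ and the bound on $|P|$ over all $T\in\mathcal{T}(N,H)$ returns exactly the global inequality $|\mathcal{T}(G,H)|\le|\mathcal{T}(N,H)|\,|\mathcal{T}(G,N)|$ used there.
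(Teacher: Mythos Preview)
Your argument is correct and is precisely the intended one: the paper states this as a corollary with no proof, meaning the reader is to repeat the counting of Proposition~\ref{p2} with the first factor pinned to the given $K$, exactly as you do. One small point worth making explicit in your boundary case: the conclusions $N\cong C_4$ and $|\mathcal{I}(G,N)|\le 3$ that you cite from Proposition~\ref{p2} were obtained there (via \cite[Lemma~2.5]{viv1}) under the \emph{global} decomposability hypothesis, whereas you only have the local one; but your equality $|A|=|P|=|\mathcal{T}(G,N)|$ forces $L\mapsto KL$ to be injective, and this fails when the nontrivial element of $K$ is an involution, so $N\cong C_2\times C_2$ is ruled out and the rest of the endgame goes through unchanged.
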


\begin{lemma}\label{l1'} Let $G=D_8$ and $H$ be a non-normal subgroup of $G$ of order $2$. Then $\left|\mathcal{I}(G,H)\right|=6$ 
\end{lemma}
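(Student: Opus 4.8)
The plan is to compute $\mathcal{I}(G,H)$ by direct enumeration, which is feasible because $[G:H]=4$ forces the induced right loops to have order $4$. First I would reduce to a single choice of $H$. The non-normal subgroups of order $2$ in $D_8$ are generated by reflections, and the two conjugacy classes of reflections are interchanged by an automorphism of $D_8$ fixing the rotation $r$ and sending $s\mapsto sr$; since $|\mathcal{I}(G,H)|$ depends on the pair $(G,H)$ only up to an automorphism of $G$, it suffices to treat $H=\langle s\rangle$, writing $G=\langle r,s\mid r^4=s^2=1,\ srs=r^{-1}\rangle$.

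Next I would list the NRTs. The right cosets of $H$ are $\{1,s\}$, $\{r,sr\}$, $\{r^2,sr^2\}$ and $\{r^3,sr^3\}$; normalization forces $1$ to be chosen from the first coset, while each of the three remaining cosets offers exactly two representatives, so there are precisely $2^3=8$ NRTs. For each such $S$ I would write out the induced operation $\{x\circ y\}=Hxy\cap S$ as a table, using that $x\circ y$ is the unique member of $S$ lying in the coset of the product $xy$. This is a routine finite computation, and one checks in passing that every column (right translation) is a permutation, confirming each $S$ is a genuine right loop.

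The heart of the argument is to sort these eight right loops into isomorphism classes, bearing in mind that any isomorphism must fix the identity and permute the three non-identity elements. I would use three invariants: whether $(S,\circ)$ is a group; the number of non-identity elements $x$ with $x\circ x=1$; and, for a distinguished non-involution $u$, the location of $u\circ(u\circ u)$ among the remaining elements. I expect to find two group structures, namely $C_4$ (from the cyclic transversal $\{1,r,r^2,r^3\}$) and a Klein four structure $V_4$ (from $\{1,sr,r^2,sr^3\}$), together with six non-group loops. The involution count isolates the loop with all three non-identity elements of order $2$ and the loop with a single involution; crucially these must be separated from $V_4$ and $C_4$ respectively by the group/non-group distinction, since the raw involution counts coincide. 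The remaining four transversals each yield a loop with exactly two involutions.

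The main obstacle is the final split of these four two-involution loops. I must verify both that they fall into two isomorphic pairs, by exhibiting for each pair the explicit relabelling of $\{1,2,3\}$ carrying one table to the other, and that the two pairs are mutually non-isomorphic, by checking that the value of $u\circ(u\circ u)$ (landing on $u$ itself versus on the other involution) genuinely differs between them. Assembling the count, two groups, the triple- and single-involution non-group loops, and the two classes among the two-involution loops, gives $|\mathcal{I}(G,H)|=6$.
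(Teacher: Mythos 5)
Your proposal is correct, but it establishes the classification by a genuinely different method than the paper. Both arguments begin the same way: there are exactly $2^3=8$ NRTs, and the classes you predict are exactly those that occur (in the paper's notation $S_1=\{1,r,r^2,r^3\}\cong C_4$, $S_2=\{1,sr,r^2,sr^3\}\cong C_2\times C_2$, a one-involution non-group loop, a three-involution non-group loop, and two classes of two-involution loops). The difference is how isomorphism and non-isomorphism are decided. The paper computes no multiplication tables at all: it determines the orbits of $Aut_HG=\{I,i_x\}$ on $\mathcal{T}(G,H)$ (four fixed points and two orbits of length $2$, hence six orbits), observes that the four representatives other than $S_1,S_2$ generate $G$, and invokes Proposition \ref{p1'} --- for corefree $H$, $Aut_HG$ acts transitively on each isomorphism class of NRTs generating $G$ --- so that generating NRTs in distinct orbits are automatically non-isomorphic. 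You instead work intrinsically inside the eight loops, using the invariants group/non-group, number of involutions, and the value of $u\circ(u\circ u)$ for the unique non-involution $u$, together with two explicit isomorphisms for the matched pairs; those two isomorphisms are, in the paper's language, just conjugation by $s$, i.e.\ co-membership in an $Aut_HG$-orbit. Your claimed computational outcomes are accurate: for example, in $\{1,sr,r^2,r^3\}$ one finds $u=r^3$, $u\circ u=r^2$ and $u\circ(u\circ u)=sr$ (the other involution), whereas in $\{1,sr,sr^2,r^3\}$ one finds $u\circ(u\circ u)=u$, so your separating invariant genuinely works. What your route buys is self-containedness: it needs nothing beyond the definition of the induced operation, while Proposition \ref{p1'} rests on the corefree hypothesis and on machinery imported from \cite{rjpf}. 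What the paper's route buys is economy and scalability: the identical orbit count handles $D_{12}$ (Remark \ref{r1}(i), giving $20$ classes among $2^5=32$ NRTs), where a table-by-table classification in your style would be considerably more laborious. A minor remark: your initial reduction to $H=\langle s\rangle$ via an automorphism is harmless but unnecessary, since for any non-normal $H=\{1,x\}$ of order $2$ one may pick an element $y$ of order $4$ and obtain the same presentation $\langle x,y \mid x^2=y^4=1,\ xyx=y^3\rangle$, which is how the paper treats all such $H$ simultaneously.
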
 
\begin{proof} Let $H=\{1,x\}$. Let $y\in G$ be of order 4 and $N=\langle y \rangle$. Then $G=\langle x,y\rangle$ with $xyx=y^3$ and $Aut_HG=\{I,i_x\}$, where $i_x$ denotes the inner automorphism of $G$ determined by $x$. Let $\epsilon :N\rightarrow H$ be a function with $\epsilon (1)=1$. Let $S_{\epsilon}=\{\epsilon(y^i)y^i|1 \leq i \leq 4 \} \in \mathcal{T}(G,H)$. Note that $xS_{\epsilon}x^{-1}=S_{\epsilon}$ means that $\epsilon(y^i)y^i \in S_{\epsilon}$ if and only if $\epsilon(y^i)y^{4-i} \in S_{\epsilon}$. This implies that $S_1=N=\{I,y,y^2,y^3\}$, $S_2=\{I,xy,y^2,xy^3\}$, $S_3=\{I,y,xy^2,y^3\}$ and $S_4=\{I,xy,xy^2,xy^3\}$ are the fixed point of the action of $Aut_HG$ on $\mathcal{T}(G,H)$. Since $|\mathcal{T}(G,H)|=8$, there are two orbits of length 2. Let $S_5=\{I,xy,y^2,y^3\}$, $S_6=\{I,xy,xy^2,y^3\}$. Then the NRTs $S_5$ and $S_6$ are in the distinct $Aut_HG$ orbits which are not singletons.   

One observes that $S_1 \cong C_4$, $S_2 \cong C_2 \times C_2$ and $\langle S_i \rangle =G$ $(3 \leq i \leq 6)$. Then $S_1 \ncong S_2$. Further, if $S_i \cong S_j$ for $3 \leq i \neq j \leq 6$, then by Proposition \ref{p1'} $xS_ix^{-1}=S_j$, which is a contradiction (for $S_i$ and $S_j$ lie in different $Aut_HG$-orbits). 
\end{proof} 

\begin{remark} \label{r1} (i) By the same argument as above, we find that $\left|\mathcal{I}(G,H)\right|=20$, where $G=D_{12}$ and $H$ is a non-normal subgroup of $G$ of order 2 (see also \cite[Remark 2.7, p. 2028]{viv2}). In \cite{viv3}, a formula for the number of orbits of the action of $Aut_HG$ on $\mathcal{T}(G,H)$ has been obtained.

(ii) As argued in the proof of Lemma \ref{l1'}, we see that if $H$ is a corefree subgroup of a finite group $G$, then the NRTs from different orbits of the action of $Aut_HG$ on $\mathcal{T}(G,H)$ which generate the group $G$ represent pairwise non-isomorphic NRTs. 
\end{remark}

\begin{lemma}\label{l} Let $G=Alt(4)$ and $H$ be a subgroup of $G$ of order $2$. Then $\left|\mathcal{I}(G,H)\right|=5$.
\end{lemma}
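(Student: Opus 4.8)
The plan is to exploit the fact that, for this pair, the isomorphism classes of NRTs coincide with the orbits of $Aut_HG$ on $\mathcal{T}(G,H)$, and then to count these orbits by Burnside's lemma. First I would record two structural facts. Since $H$ is a non-normal subgroup of order $2$, it is corefree, so $Core_G(H)=\{1\}$. Moreover, for any $S\in\mathcal{T}(G,H)$ we have $\langle S\rangle\supseteq S$, so $|\langle S\rangle|\geq [G:H]=6$; but $Alt(4)$ has no subgroup of order $6$ (its proper subgroups have orders $1,2,3,4$), whence $\langle S\rangle=G$ for \emph{every} NRT $S$. Thus every NRT generates $G$.

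These two facts let me identify isomorphism classes with orbits. On one hand, if $T=\phi(S)$ for some $\phi\in Aut_HG$, then $\phi|_S$ is a right-loop isomorphism $S\to T$, because $\phi(H)=H$ gives $\phi(Hxy\cap S)=H\phi(x)\phi(y)\cap T$; so NRTs in the same orbit are isomorphic. On the other hand, since $H$ is corefree and every NRT generates $G$, Remark \ref{r1}(ii) (equivalently the transitivity of Proposition \ref{p1'}) shows that NRTs lying in distinct orbits are pairwise non-isomorphic. Hence $|\mathcal{I}(G,H)|$ equals the number of $Aut_HG$-orbits on $\mathcal{T}(G,H)$.

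It remains to count orbits. Here I would use $Aut(Alt(4))\cong Sym(4)$, realised by conjugation, so that writing $H=\{1,x\}$ with $x$ a double transposition, $Aut_HG$ is the centralizer $C_{Sym(4)}(x)$, a group of order $8$ isomorphic to $D_8$; also $|\mathcal{T}(G,H)|=2^{5}=32$. By Burnside's lemma the number of orbits is $\tfrac{1}{8}\sum_{\sigma}|\mathrm{Fix}(\sigma)|$, so the core computation is, for each of the eight conjugations $\sigma$, the number of NRTs $S$ with $\sigma S\sigma^{-1}=S$. The main technical point is that a fixed NRT must possess a $\sigma$-stable representative in each coset, and the coset $V\setminus H$ (where $V$ is the Klein four-subgroup) forces most $\sigma$ to fix nothing: the four elements of $C_{Sym(4)}(x)$ lying outside $V$ interchange the two double transpositions of $V\setminus H$, so contribute $0$; conjugation by the two elements of $V\setminus H$ produces internal swaps among the four cosets of $3$-cycles, again contributing $0$; conjugation by $x$ pairs these four cosets without internal swaps and fixes $V\setminus H$ pointwise, contributing $2^{3}=8$; and the identity contributes $32$. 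Thus the count is $\tfrac{1}{8}(32+8)=5$.

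The step I expect to be the main obstacle is this last fixed-point enumeration: it requires laying out the six cosets of $H$ explicitly (the coset $\{1,x\}$, the coset $V\setminus H$, and the four cosets of $3$-cycles) and tracking, for each $\sigma$, how conjugation permutes both the cosets and the two elements within each, so as to verify that precisely the identity and conjugation by $x$ admit fixed transversals. Everything else—corefreeness, the generation fact, and the passage from orbits to isomorphism classes—follows directly from the results already established.
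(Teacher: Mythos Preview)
Your argument is correct. The reduction you make is exactly the paper's: $H$ is corefree, every NRT generates $G$ (since $Alt(4)$ has no subgroup of order $6$), and hence by Remark \ref{r1}(ii) the isomorphism classes coincide with the $Aut_HG$-orbits on $\mathcal{T}(G,H)$, with $Aut_HG\cong C_{Sym(4)}(x)\cong D_8$.

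Where you diverge from the paper is in the orbit count. The paper writes down five explicit NRTs $S_1,\dots,S_5$, checks that each has a trivial stabiliser in $P$ except possibly for conjugation by $x$, and reads off orbit lengths $8,8,8,4,4$ summing to $32=|\mathcal{T}(G,H)|$; this simultaneously exhibits representatives and proves there are no further orbits. You instead run Burnside's lemma, computing fixed-point counts for each of the eight conjugations, and your case analysis (the four elements outside $V$ swap the two elements of $V\setminus H$; the two elements of $V\setminus H$ swap the two elements of some $3$-cycle coset; conjugation by $x$ pairs the four $3$-cycle cosets without internal swaps) is accurate and yields $(32+8)/8=5$. Your route avoids having to guess orbit representatives and is arguably more systematic, at the cost of not producing explicit NRTs; the paper's route gives concrete witnesses but requires verifying that the orbit lengths exhaust $\mathcal{T}(G,H)$. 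Either way the computation is short.
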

\begin{proof}Since $H$ is a subgroup of $G=Alt(4)$ of order 2, there is a unique Sylow $2$-subgroup $P$ of $Sym(4)$ such that $H=Z(P)$, the center of $P$. Since $Aut(Alt(4)) \cong Inn(Sym(4)) \cong Sym(4)$, $Aut_HG \cong N_{Sym(4)}(H)(=P)$, the normalizer of $H$ in $Sym(4)$.
As there is no subgroup of $Alt(4)$ of index 2, $\langle S \rangle =Alt(4)$ for all $S \in \mathcal{T}(G,H)$. By Remark \ref{r1}(ii), $\left|\mathcal{I}(Alt(4),H)\right|$ is precisely the number of orbits of the conjugation action of $P$ on $\mathcal{T}(Alt(4),H)$.  

We may assume that $H=\{I,x=(1,2)(3,4)\}$ (as any two elements of order 2 in $G$ are conjugate). Let $P=\langle (1,2),(1,3,2,4) \rangle$. Then $Aut_H(Alt(4))= \{i_g|g \in P\}$, where for $g \in P$, $i_g$ denotes the conjugation of $Alt(4)$ by $g$. 

Let $T=\{I,y=(1,3)(2,4)\}$, $L=\{I,z=(1,2,3),z^{-1}=(1,3,2)\}$ and $S=TL$. Then $S \in \mathcal{T}(G,H)$. Let $S_1=S=\{I,y, z,z^{-1},yz^{-1},yz\}$, $S_2=\{I,y, z,z^{-1},yz^{-1},xyz\}, S_3=\{I,y, z,z^{-1},xyz^{-1},xyz\}$, $S_4=\{I,y, z,z^{-1}, \\ xyz^{-1},yz\}$ and $S_5=\{I,y, z,xz^{-1},yz^{-1},yz\}$. Let $i \in \{1,\cdots , 5 \}$. We note that, if $g \in P$ such that $gS_ig^{-1}=S_i$, then $gyg^{-1}=y$ and so $g=x$. Since $xzx^{-1}=yz$ and $xz^{-1}x^{-1}=xyz^{-1}$, it follows that $S_1,S_2,S_3,S_4$ and $S_5$ lie in different orbits with orbit length 8,8,8,4 and 4 respectively. 
\end{proof}

\begin{lemma}\label{l2}
Let $(G,H)$ be a minimal counterexample and $N$ be a proper $Aut_HG$-invariant subgroup of $G$. Let $K \in \mathcal{T}(N,H)$ which is a subgroup of $N$. Then $[G:N] \neq 2$. 
\end{lemma}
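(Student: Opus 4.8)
The plan is to argue by contradiction. Suppose $[G:N]=2$, so that $N\trianglelefteq G$ and $G/N\cong C_2$; write $\pi:G\to G/N$ for the quotient map. Since $H\leq N$, each right coset of $H$ lies wholly inside $N$ or inside $G\setminus N$, so for every $S\in\mathcal{T}(G,H)$ the intersection $S\cap N$ is an NRT of $H$ in $N$, and, because $H\leq N=\ker\pi$, the restriction $\pi|_S:S\to G/N$ is a homomorphism of right loops with kernel $S\cap N$. I would begin by applying Corollary \ref{p2c} to fix an $S\in\mathcal{T}(G,H)$ with $K\subseteq S$ and $S\neq KL$ for all $L\in\mathcal{T}(G,N)$.

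The first step is to show $\langle S\rangle=G$. If not, Proposition \ref{p1}(iii) gives $H_S=\{1\}$, i.e. $S$ is a subgroup of $G$; then $S\cap N$ is a subgroup lying in $\mathcal{T}(N,H)$, and since $K\subseteq S\cap N$ with $|K|=[N:H]=|S\cap N|$ we get $S\cap N=K$. As $S\not\subseteq N$, the subgroup $K=S\cap N$ has index $2$ in $S$, whence $S=K\cup Ks=K\{1,s\}$ for any $s\in S\setminus N$. But $\{1,s\}\in\mathcal{T}(G,N)$, so $S=KL$, contradicting the choice of $S$. Therefore $\langle S\rangle=G$, $S\cap N=K$, and $\pi|_S:S\to C_2$ is a loop epimorphism whose kernel is the subgroup $K$.

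The second step exploits minimality on the smaller pair $(N,H)$. As $|N|<|G|$, minimality of $|G|$ forces $|\mathcal{I}(N,H)|\neq 4$, while $|\mathcal{I}(N,H)|\neq 2$ by \cite{viv1}; hence $|\mathcal{I}(N,H)|\in\{1,3\}$. If $|\mathcal{I}(N,H)|=3$, then $[N:H]=3$ by \cite{viv2}, so $|K|=3$ and $[G:H]=2[N:H]=6$; since $Core_G(H)=\{1\}$ by Proposition \ref{p1}(i), $G$ acts faithfully on the six cosets of $H$, and I would eliminate this case using that small-index structure against the minimality of $[G:H]$. If $|\mathcal{I}(N,H)|=1$, then $H\trianglelefteq N$ by \cite{rjpf}, so $N=HK$ with $H\cap K=\{1\}$ and $K\cong N/H$; fixing $g\in G\setminus N$, the condition $Core_G(H)=\{1\}$ collapses to $H\cap gHg^{-1}=\{1\}$, and since $g^2\in N$ normalizes $H$, conjugation by $g$ interchanges the two normal subgroups $H$ and $gHg^{-1}$ of $N$, which pins down the local structure of $G$ around $H$.

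The main obstacle — the crux of the argument — is to turn these strands into a numerical contradiction with $|\mathcal{I}(G,H)|=4$. Here I would play the non-split transversal $S$ of the first step against the split transversals $KL$ (for $L\in\mathcal{T}(G,N)$) and against any subgroup-type NRTs: since $\langle S\rangle=G$ and $H$ is corefree, Proposition \ref{p1'} identifies the isomorphism class of $S$ with its $Aut_HG$-orbit, and Remark \ref{r1}(ii) ensures that generating NRTs lying in distinct $Aut_HG$-orbits are pairwise non-isomorphic. The hard part is to show that, in either case for $(N,H)$, the $C_2$-graded structure of these transversals produces at least five such orbits (or else forces $H\trianglelefteq G$ or a strictly smaller counterexample), contradicting $|\mathcal{I}(G,H)|=4$ or the minimality of $(G,H)$. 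Controlling this orbit count for a subgroup transversal $K$ is where the real work lies.
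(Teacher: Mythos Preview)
Your sketch has two genuine problems. First, the deduction that $|\mathcal{I}(N,H)|\in\{1,3\}$ is wrong: minimality of $(G,H)$ only excludes the value $4$, and \cite{viv1} excludes $2$; nothing rules out $|\mathcal{I}(N,H)|\geq 5$. You would need a separate argument passing from many isomorphism classes in $\mathcal{T}(N,H)$ to many in $\mathcal{T}(G,H)$, and since Proposition~\ref{prn1} is not yet available at this point in the paper you cannot simply assume that the extensions $K_iL$ all generate $G$ in order to invoke Proposition~\ref{p1'}. Second, and more seriously, you stop exactly where the work begins: you correctly identify ``producing at least five $Aut_HG$-orbits'' as the crux but give no construction, and the single non-split $S$ you extract from Corollary~\ref{p2c} accounts for only one class.

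The paper's proof takes a quite different and entirely explicit route. It splits on $[N:H]$ rather than on $|\mathcal{I}(N,H)|$: when $[N:H]=2$ the corefree embedding $G\hookrightarrow Sym(4)$ forces $G\cong D_8$, whence $|\mathcal{I}(G,H)|=6$ by Lemma~\ref{l1'}. When $[N:H]>2$ it fixes $L=\{1,l\}\in\mathcal{T}(G,N)$, distinct $k_2,k_3\in K\setminus\{1\}$, $h\in H\setminus\{1\}$, sets $U=(K\setminus\{k_3\})\cup\{hk_3\}$ (a non-subgroup member of $\mathcal{T}(N,H)$), and writes down five concrete NRTs: $S_1=KL$, $S_2=UL$, and $S_3,S_4,S_5$ obtained by replacing one or two elements of the $l$-fibre of $S_1$ or $S_2$ by their $h$-translates. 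The substance of the proof is then a direct verification (using $N=HK$, $K\in\mathcal{T}(N,H)$, and $Aut_HG$-invariance of $N$) that $S_3,S_4,S_5$ admit no decomposition $T'L'$, and that no $f\in Aut_HG$ carries $S_3$ to $S_4$; together with the subgroup/non-subgroup distinction between $K=S_1\cap N$ and $U=S_2\cap N$, this separates all five classes. Corollary~\ref{p2c} is not used.
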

\begin{proof} If possible, assume that $[G:N]=2$. Further, assume that $[N:H]=2$. Since $Core_G(H)= \{1\}$ (Proposition \ref{p1}(i)), we can identify $G$ with a subgroup of $Sym(4)$. The only possibility for the pair $(G,N)$ we are left with is $G$, a Sylow $2$-subgroup of $Sym(4)$, $N \cong C_2 \times C_2$. Hence $G \cong D_8$. By Lemma \ref{l1'}, $|\mathcal{I}(G,H)|=6$, a contradiction. Thus $[N:H]>2$. Let $L=\{1,l\} \in \mathcal{T}(G,N)$, $k_2, k_3 (\neq k_2) \in K \setminus \{1\}$ and $U=K \setminus \{k_3\} \cup \{hk_3\}$, where $h \in H \setminus \{1\}$. We note that $U$ is not a subgroup of $N$, for $k_2,k_3k_2^{-1} \in U$ but $k_3=(k_3k_2^{-1})k_2 \notin U$. Let $S_1=KL$ and $S_2=UL$. As $U =N \cap S_2$ is not a subgroup of $N$, $S_2$ is not subgroup of $G$. Let $S_3=(S_1 \setminus \{k_3l\}) \cup \{hk_3l\}$, $S_4=(S_1 \setminus \{k_2l,k_3l\}) \cup \{hk_2l,hk_3l\}$ and $S_5=S_2 \setminus \{hk_3l\} \cup \{k_3l\}$. Observe that $S_i$ ($3 \leq i \leq 5$) are not subgroups of $G$, for $hk_3=(hk_3l)(l^{-1}) \notin S_i$ ($i=3,4$) and $k_3=(k_3l)(l^{-1}) \notin S_5$.

We also claim that $S_i \neq T^{\prime}L^{\prime}$ ($3 \leq i \leq 5$) for any $T^{\prime} \in \mathcal{T}(N,H)$ and $L^{\prime} \in \mathcal{T}(G,N)$. If possible, suppose that $S_3=T^{\prime}L^{\prime}$ for some $T^{\prime} \in \mathcal{T}(N,H)$ and $L^{\prime} \in \mathcal{T}(G,N)$. Then $T^{\prime}=S_3 \cap N=K$. Since $[G:N]=2$, either $hk_3l \in L^{\prime}$ or $kl \in L^{\prime}$ for some $k \in K \setminus \{k_3\}$. Further, $N=HK$ is a subgroup of $G$, $hk_3=k_3^{\prime}h^{\prime}$ for some $k_3^{\prime} \in K$ and $h^{\prime} \in H$. Also $h^{\prime} \neq 1$, for $K \in \mathcal{T}(N,H)$. Assume that $hk_3l \in L^{\prime}$. Then ${k_3^{\prime}}^{-1}(hk_3l)=h^{\prime}l \in KL^{\prime}=S_3$. This is a contradiction, for $l \in S_3$ and $h^{\prime} \neq 1$. Thus $kl \in L^{\prime}$ for some $k \in K \setminus \{k_3\}$. Since $hk_3l \in S_3$, we have $hk_3l=k^{\prime}(kl)$ for some $k^{\prime} \in K$. This implies that $hk_3 \in K$, a contradiction. Similarly $S_4 \neq T^{\prime}L^{\prime}$ for any $T^{\prime} \in \mathcal{T}(N,H)$ and $L^{\prime} \in \mathcal{T}(G,N)$. We again claim the same for $S_5$. If possible, suppose that $S_5=T^{\prime}L^{\prime}$ for some $T^{\prime} \in \mathcal{T}(N,H)$ and $L^{\prime} \in \mathcal{T}(G,N)$. As argued above $T^{\prime}=U$. Since $[G:N]=2$, $kl \in L^{\prime}$ for some $k \in K$. If $k=1$, then $(hk_3)l \in UL^{\prime}=S_5$, a contradiction. Thus $k \neq 1$. This means that $k_3k \in U$, which implies that $k_3kl \in S_5$. Hence $(hk_3)kl=h(k_3kl)$ can not be in $UL^{\prime}=S_5$, a contradiction.

We now show that $S_i$ ($1 \leq i \leq 5$) are pairwise non-isomorphic NRTs of $H$ in $G$. Since $S_i \neq T^{\prime}L^{\prime}$ $(3 \leq i \leq 5)$ for any $T^{\prime} \in \mathcal{T}(N,H)$ and $L^{\prime} \in \mathcal{T}(G,N)$, $S_1 \ncong S_i$ ($3 \leq i \leq 5$) and $S_2 \ncong S_i$ $(3 \leq i \leq 5)$. Further, since $U$ is not a subgroup of $N$, $S_1 \ncong S_2$, $S_3 \ncong S_5$ and $S_4 \ncong S_5$ (as $K \subseteq S_i$ for $i \in \{1,3,4\}$ and $N$ is an $Aut_HG$-invariant subgroup of $G$). Next, assume that $S_3 \cong S_4$. Then, by Proposition \ref{p1'} there exists $f \in Aut_HG$ such that $f(S_3)=S_4$. Hence $f(K)=K$ (for $N$ is an $Aut_HG$-invariant subgroup of $G$). Assume that $f(l)=kl$ for some $k \in K$. Then $f(k^{\prime}l)=f(k^{\prime})kl \in Kl$ for all $k^{\prime} \in K$. In this case, either $hk_2l$ or $hk_3l$ can not be image of any element in $S_4$, a contradiction. Hence $f(l) = hk_2l$ or $f(l)=hk_3l$. Suppose that $f(l) = hk_2l$. Since $N=HK$ is a subgroup of $G$ and $K \in \mathcal{T}(N,H)$, $hk_2=k_2^{\prime}h^{\prime}$ for some $k_2^{\prime} \in K \setminus \{1\}$ and $h^{\prime} \in H \setminus \{1\}$. Let $k_2^{\prime \prime} \in K$ such that $f(k_2^{\prime \prime})={k_2^{\prime}}^{-1}$. This implies that $f(k_2^{\prime \prime}l)={k_2^{\prime}}^{-1}(hk_2l)=h^{\prime}l$, which is a contradiction for $l \in S_3$. Similarly $f(l) \neq hk_3l$. Hence $S_3 \ncong S_4$.

\end{proof}
\begin{lemma} \label{l2'} Let $G$ be a finite group. Let $H$ be a non-normal, abelian, corefree subgroup of $G$ and $N$  be a normal subgroup of $G$ containing $H$ such that $[N:H]=2$. Then

(i) if $[G:N]=2$, then $G \cong D_8$ and $|H|=2$.

(ii) if $[G:N]=3$, then $G \cong Alt(4) \times C_2$ and $H \cong C_2 \times C_2$ or $G \cong Alt(4)$ and $H \cong C_2$ .
\end{lemma}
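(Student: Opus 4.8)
The plan is to exploit the chain $H\trianglelefteq N\trianglelefteq G$ together with the corefree hypothesis, reducing both parts to the intersections of the conjugates of $H$. First I would observe that since $H\trianglelefteq N$ we have $N\leq N_G(H)$, and as $[G:N]$ is prime (equal to $2$ in (i) and $3$ in (ii)) the non-normality of $H$ forces $N_G(H)=N$. Hence $H$ has exactly $[G:N]$ conjugates in $G$, each an index-$2$ subgroup of $N$, and their intersection is $Core_G(H)=\{1\}$. Writing $H_0=H$ and $H_i=g^iHg^{-i}$ for a fixed $g\notin N$, the whole argument then runs through the elementary subgroup arithmetic of these $H_i$ inside $H$.

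For part (i) there are just two conjugates $H_0,H_1$. Since $H_0\neq H_1$ are distinct index-$2$ subgroups of $N$, one has $H_0H_1=N$ and therefore $[H_0:H_0\cap H_1]=2$; but $H_0\cap H_1=Core_G(H)=\{1\}$, so $|H|=2$, $|N|=4$ and $|G|=8$. A group of order $4$ with two distinct subgroups of order $2$ must be $C_2\times C_2$, so $N\cong C_2\times C_2$, and the only group of order $8$ containing a non-normal subgroup of order $2$ is $D_8$ (all abelian groups of order $8$ have every subgroup normal, and $Q_8$ has a unique, hence central, involution). This gives $G\cong D_8$ with $|H|=2$.

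For part (ii) there are three conjugates $H_0,H_1,H_2$. Inside $H_0$ put $A=H_0\cap H_1$ and $B=H_0\cap H_2$; as before each has index $2$ in $H_0$, while $A\cap B=H_0\cap H_1\cap H_2=\{1\}$. The set-size identity $|AB|=|A||B|/|A\cap B|=(|H_0|/2)^2$ together with $AB\subseteq H_0$ forces $|H_0|\leq 4$, and since $|H|$ is even this leaves $|H|\in\{2,4\}$. This bound $|H|\le 4$ is the heart of the lemma; once it is in place the two listed groups emerge by a short classification. When $|H|=2$ the three order-$2$ conjugates exhaust the involutions of the order-$4$ group $N$, so $N\cong C_2\times C_2$, and $G$ of order $12$ is $N\rtimes C_3$ (Schur--Zassenhaus) with necessarily nontrivial action, i.e. $G\cong Alt(4)$. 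When $|H|=4$ the subgroups $A,B$ are distinct order-$2$ subgroups of $H$ meeting trivially, which rules out $C_4$ and gives $H\cong C_2\times C_2$; then $N$ has order $8$ and contains three conjugate copies of $C_2\times C_2$, and inspecting the five groups of order $8$ shows only $C_2\times C_2\times C_2$ has that many such subgroups, so $N\cong C_2\times C_2\times C_2$.

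Finally, in this last subcase $G=N\rtimes C_3$ has order $24$ with $N\cong\F_2^3$, and I would identify $G$ from the $C_3$-action: any order-$3$ element of $GL(3,\F_2)$ has characteristic polynomial $(t-1)(t^2+t+1)$ over $\F_2$, hence fixes a nonzero vector. That fixed vector spans a central subgroup $\cong C_2$ of $G$, and Maschke's theorem splits $N$ as this line together with a $C_3$-invariant plane $W$; the subgroup $\langle W,C_3\rangle$ is then a complement isomorphic to $(C_2\times C_2)\rtimes C_3\cong Alt(4)$, so $G\cong Alt(4)\times C_2$ with $H\cong C_2\times C_2$. I expect the main obstacle to be precisely this last identification — extracting the central $C_2$ factor and the $Alt(4)$ complement from the module structure — together with making the order bound $|H|\le 4$ and the order-$8$ classification fully watertight.
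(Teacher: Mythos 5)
Your proof is correct, but it takes a genuinely different route from the paper's. The paper proves (i) by embedding $G$ into $Sym(4)$ via the coset action (using corefreeness) and identifying $G$ as a Sylow $2$-subgroup of $Sym(4)$, and proves (ii) by embedding $G$ into $Sym(6)$, invoking the Burns--Goldsmith bound that abelian subgroups of $Sym(6)$ have order at most $9$ (this is precisely where the abelian hypothesis on $H$ enters), and then eliminating each value $|H|\in\{9,8,7,6,5,3\}$ and settling $|H|=4$ and $|H|=2$ with citations to Weisner (self-normalizing Sylow subgroups of $Sym(6)$), Burnside's classification results for the relevant groups of order $24$, and the Dedekind--Baer theorem. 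You instead argue intrinsically: from $H\trianglelefteq N$, $[G:N]$ prime, and $H$ non-normal you get $N_G(H)=N$, so $H$ has exactly $[G:N]$ conjugates, all index-$2$ subgroups of $N$, with trivial intersection by corefreeness; the product formula $|AB|=|A||B|/|A\cap B|$ applied to $A=H_0\cap H_1$, $B=H_0\cap H_2$ inside $H_0$ then gives the sharp bound $|H|\le 4$, after which Schur--Zassenhaus, the fact that every order-$3$ element of $GL(3,\F_2)$ has a nonzero fixed vector, and Maschke pin down $G$. (All the steps you flag as obstacles do go through: the bound, the order-$8$ classification, and the splitting $G\cong C_2\times Alt(4)$; the only point left tacit is that in the $|H|=4$ case the $C_3$-action on $N$ is nontrivial, which follows exactly as in your $|H|=2$ case, since a trivial action would make $G$ abelian and $H$ normal.) What your approach buys: it is self-contained, needing no facts about $Sym(6)$ or Burnside's tables; it gives a sharper bound ($|H|\le 4$ rather than $|H|\le 9$); and it never uses the hypothesis that $H$ is abelian --- abelianness falls out automatically from $|H|\le 4$ --- so you have in fact proved a slightly stronger statement. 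What the paper's approach buys: it stays within the permutation-representation framework used throughout the rest of the paper and replaces your final module-theoretic identification of $Alt(4)\times C_2$ by appeals to the literature.
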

\begin{proof} (i) Assume that $[G:N]=2$. Then as argued in the first few lines of Lemma \ref{l2}, $G \cong D_8$, $N \cong C_2 \times C_2$ and $H$ is a non-normal subgroup of $G$ of order $2$.

(ii) Assume that $[G:N]=3$. We can identify $G$ with a subgroup of $Sym(6)$. Since the order of an abelian subgroups of $Sym(6)$ is at most $9$ (\cite[Theorem 1, p. 70]{bg}), $|H| \leq 9$. Further, since $Sym(6)$ has no subgroup of order 54, $|H| \neq 9$. Assume that $|H|=8$. Then $|N|=16$. Hence $N$ is a Sylow 2-subgroup of $Sym(6)$. Since $G \subseteq N_{Sym(6)}(N)$ (the normalizer of $N$ in $Sym(6)$) and a Sylow 2-subgroup of $Sym(6)$ is self-normalizing (\cite[Corollary 1, p. 123]{lw}), $N=G$, a contradiction. Further $|H| \neq 7$, for $Sym(6)$ does not contain a 7-cycle.

Next, assume that $|H|=6$. Then $|N|=12$. By classification of groups of order 12, $N \cong D_{12}$. Since $N \trianglelefteq G$ and $N$ contains a unique cyclic subgroup of order 6, $H \trianglelefteq G$, a contradiction. Next, assume that $|H|=5$. Then $N \cong D_{10}$. Hence, in this case also $H \trianglelefteq G$. Similar argument shows that $|H| \neq 3$.

Next, assume that $|H|=4$. Then $|N|=8$. Assume that $H \cong C_4$. Since $Sym(6)$ does not contain an 8-cycle, $N \ncong C_8$. Suppose that $N \cong (C_4 \times C_2)$. Then, by \cite[84(ii), p.102]{wb} $G \cong N \times C_3$. This is a contradiction, for the order of an abelian subgroup of $Sym(6)$ is at most $9$ (\cite[Theorem 1, p. 70]{bg}). Assume that $N \cong D_8$. Then as argued in the above paragraph $H \trianglelefteq G$, a contradiction. Let $N \cong Q_8$ (the quaternion group of order 8). By \cite[84(iv), p. 103]{wb}, either $G \cong (Q_8 \times C_3)$ or $G \cong (Q_8 \rtimes C_3)$. If $G \cong (Q_8 \times C_3)$, then $H \trianglelefteq G$ (\cite[5.3.7 (Dedekind, Baer), p. 143]{rob}), a contradiction. Hence $G \cong (Q_8 \rtimes C_3)$. But in this case $|Core_G(H)|=2$, a contradiction. Thus, $H \cong C_2 \times C_2$. This implies $N \cong D_8$ or $N \cong C_2 \times C_4$ or $N \cong C_2 \times C_2 \times C_2$. Also if $N \cong D_8$, then $H \trianglelefteq G$ (since $N$ contains a unique non-cyclic subgroup of order $4$). This is a contradiction. If $N \cong C_2 \times C_4$, then $G \cong (C_2 \times C_4) \times C_3$ (\cite[84(ii), p.102]{wb}), a contradiction. Thus $N \cong C_2 \times C_2 \times C_2$. By \cite[84(iii), p.102]{wb}, $G \cong Alt(4) \times C_2$.       
 
 Lastly, assume that $|H|=2$. Then either $N \cong C_4$ or $N \cong C_2 \times C_2$. If $N \cong C_4$, then by classification of non-abelian groups of order 12, $G \cong C_4 \rtimes C_3$. But in this case $H \trianglelefteq G$, a contradiction. Hence $N \cong C_2 \times C_2$. Since $N \trianglelefteq G$, $G \cong Alt(4)$.    
\end{proof} 

\begin{lemma}\label{l1}
Let $G \cong Alt(4) \times C_2$ and $H$ be a corefree subgroup of $G$ of index $6$. Then $\left|\mathcal{I}(G,H)\right| > 4$.
\end{lemma}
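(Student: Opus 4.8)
The plan is to realize the right loops of the pair $(Alt(4),C_2)$, already counted in Lemma \ref{l}, inside $\mathcal{T}(G,H)$, and thereby transport the count $5$ up to $G$. Write $G=Alt(4)\times C_2$ with $C_2=\langle t\rangle$, and set $A=Alt(4)\times\{1\}$, a subgroup of index $2$ in $G$ isomorphic to $Alt(4)$. First I would note that $H\cong C_2\times C_2$ (since $G$ has no element of order $4$) and that $H\not\subseteq A$: otherwise $H$ would be the unique Klein four subgroup $V\times\{1\}$ of $A$, which is normal in $G$, contradicting that $H$ is corefree. As $A\trianglelefteq G$ has index $2$ and $H\not\subseteq A$, we get $HA=G$, so $H':=H\cap A$ has order $2$. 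Being an order-$2$ subgroup of $A\cong Alt(4)$, $H'$ is automatically corefree in $A$.

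The key step is a sub-transversal principle: I claim $\mathcal{T}(A,H')\subseteq\mathcal{T}(G,H)$ and that the induced right-loop operation on a given $T$ is the same whether $T$ is regarded as an NRT of $H'$ in $A$ or of $H$ in $G$. For the containment, if $T\in\mathcal{T}(A,H')$ and $t_1,t_2\in T$ lie in the same right coset of $H$, then $t_1t_2^{-1}\in H\cap A=H'$, forcing $t_1=t_2$; since $|T|=[A:H']=6=[G:H]$ and $1\in T$, the set $T$ meets each of the $6$ cosets of $H$ exactly once. For the operations, given $x,y\in T$ one has $xy\in A$, and from $A\cap Hxy=(H\cap A)xy=H'xy$ it follows that $T\cap Hxy=T\cap H'xy$; thus the two definitions of $x\circ y$ agree. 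Consequently loop-isomorphism of two members of $\mathcal{T}(A,H')$ is the same relation whether it is tested in $A$ or in $G$, so the inclusion induces an \emph{injection} $\mathcal{I}(A,H')\hookrightarrow\mathcal{I}(G,H)$.

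Finally, Lemma \ref{l} gives $|\mathcal{I}(A,H')|=|\mathcal{I}(Alt(4),C_2)|=5$, whence $|\mathcal{I}(G,H)|\geq 5>4$, as required. I expect the only delicate point to be the sub-transversal principle, specifically the verification that the induced operations coincide, so that distinct classes of $\mathcal{I}(A,H')$ remain distinct in $\mathcal{I}(G,H)$; the reduction $|H\cap A|=2$ is where corefreeness of $H$ is genuinely used, and everything else is routine bookkeeping about $Alt(4)\times C_2$. An alternative, more computational route would exhibit five explicit NRTs and separate them using invariants such as $\langle S\rangle$, $H_S$ and the number of elements of each order, invoking Remark \ref{r1}(ii) for the generating ones; but the embedding argument above is shorter and reuses Lemma \ref{l} directly.
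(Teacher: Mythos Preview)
Your argument is correct and follows essentially the same route as the paper: both identify $A\cong Alt(4)$ inside $G$, use corefreeness to force $|H\cap A|=2$ and $HA=G$, embed $\mathcal{T}(A,H\cap A)\subseteq\mathcal{T}(G,H)$, and then invoke Lemma~\ref{l}. The paper is terser---it asserts the inclusion $\mathcal{T}(K,K\cap H)\subseteq\mathcal{T}(G,H)$ and the resulting inequality without spelling out your ``sub-transversal principle'' that the induced right-loop operations coincide---so your write-up simply makes explicit a step the paper takes for granted.
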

\begin{proof}
Since $Core_G(H)=\{1\}$, we can identify $G$ with a subgroup of $Sym(6)$. Thus, there exist subgroups $K$ and $L$ of $Sym(6)$ such that $K \cong Alt(4)$, $L \cong C_2$, $K \cap L=\{1\}$, $G=KL$, $K \trianglelefteq G$ and $L \trianglelefteq G$. Further, since $Core_G(H)=\{1\}$, $H \cap L=\{1\}$ and $|H \cap K|=2$. Thus $HK=G$ and so $\mathcal{T}(K,K \cap H) \subseteq \mathcal{T}(G,H)$. By Lemma \ref{l}, $|\mathcal{I}(K, K \cap H)|=5$. Thus $\left|\mathcal{I}(G,H)\right| > 4$.
\end{proof}

\begin{lemma} \label{ln1} Let $(G,H)$ be a minimal counterexample. Let $N$ be an $Aut_HG$-invariant subgroup of $G$ such that $[N:H]=2$. Assume that there exists $T \in \mathcal{T}(G,H)$ which is a subgroup of $N$. Then $[G:N] \in \{2,3\}$. 
\end{lemma}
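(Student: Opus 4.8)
The plan is to show that $[G:N]\ge 4$ is impossible; combined with the fact that $N\subsetneq G$ forces $[G:N]\ge 2$ (otherwise $[G:H]=2$ and $H\trianglelefteq G$, against $|\mathcal{I}(G,H)|=4$), this yields $[G:N]\in\{2,3\}$. First I would record the usable form of the hypothesis: the transversal in the hypothesis is an element of $\mathcal{T}(N,H)$ that is a subgroup of $N$, so, since $[N:H]=2$, it equals $T=\{1,t\}$ with $t^2=1$ and $t\notin H$; thus $N=H\langle t\rangle$ with $H\trianglelefteq N$. Fix $L=\{1=l_1,l_2,\dots,l_m\}\in\mathcal{T}(G,N)$ with $m=[G:N]$, so that $TL\in\mathcal{T}(G,H)$. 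By Proposition \ref{p1}(i) the subgroup $H$ is corefree, and by Proposition \ref{p1}(iii) any NRT that is not a subgroup of $G$ generates $G$; hence Proposition \ref{p1'} and Remark \ref{r1}(ii) let me certify that two such NRTs are non-isomorphic once I know they lie in distinct $Aut_HG$-orbits. I would also note, as a bookkeeping device, that $[G:N]<[G:H]$, so by minimality of the index $|\mathcal{I}(G,N)|\ne 4$, and by \cite{viv1} it is $\ne 2$; if $|\mathcal{I}(G,N)|=3$ then $[G:N]=3$ by \cite{viv2}, already in the desired set. Thus the whole content is to exclude $[G:N]\ge 4$.

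The exclusion is an explicit construction mirroring the proof of Lemma \ref{l2}, with the roles of $N$ and $H$ interchanged. There one had $[G:N]=2$ and exploited two distinct nontrivial elements $k_2,k_3$ of the subgroup transversal; here $[N:H]=2$ offers only the single nontrivial $t$, so instead I would exploit three distinct nontrivial coset representatives $l_2,l_3,l_4\in L$, available precisely because $m\ge 4$. Starting from $S_1=TL$, I would twist representatives block by block, replacing $tl_i$ by $htl_i$ for a fixed $h\in H\setminus\{1\}$ on selected blocks $i\in\{2,3,4\}$, to manufacture $S_2,\dots,S_5\in\mathcal{T}(G,H)$, each still containing $T$, so that $N\cap S_j=T$ for every $j$. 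Using Corollary \ref{p2c} I would arrange that at least one of these contains $T$ yet is not of the form $T'L'$ for any $T'\in\mathcal{T}(N,H)$ and $L'\in\mathcal{T}(G,N)$; the same coset computation as in Corollary \ref{p2c} and Lemma \ref{l2} then separates the product transversals among the $S_j$ from the non-product ones.

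It then remains to check that $S_1,\dots,S_5$ are pairwise non-isomorphic. Being of the form $T'L'$ is an $Aut_HG$-invariant property, since $N$ is $Aut_HG$-invariant and any $f\in Aut_HG$ carries $\mathcal{T}(N,H)$ and $\mathcal{T}(G,N)$ into themselves; so a product transversal cannot be isomorphic to a non-product one, which handles the comparisons across the two kinds. Within a single kind I would separate the transversals by $Aut_HG$-orbit: if $f\in Aut_HG$ sends one $S_j$ to another, then $f(N)=N$ and $f(T)=f(N\cap S_j)=N\cap f(S_j)=T$, so $f(t)=t$ and $f$ merely permutes the blocks indexed by $L$; matching the twisted blocks $l_2,l_3,l_4$ then forces a contradiction exactly as the relation $f(l)=kl$ did in Lemma \ref{l2}. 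Since each non-subgroup $S_j$ generates $G$ (Proposition \ref{p1}(iii)), Proposition \ref{p1'} promotes ``distinct orbit'' to ``non-isomorphic''. Five pairwise non-isomorphic NRTs contradict $|\mathcal{I}(G,H)|=4$, forcing $m\le 3$.

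The hardest part is the final non-isomorphism bookkeeping. Because $[N:H]=2$, the only available within-$N$ twist is by elements of $H$, and $N\cap S_j=T$ is the \emph{same} subgroup for every $j$; so, unlike in Lemma \ref{l2}, where the invariant ``$N\cap S$ is or is not a subgroup'' already split the list, here all the separating work must be carried by the product-form invariant together with the orbit computation across the blocks $l_2,l_3,l_4$. I expect the delicate points to be choosing the twists so that no two product transversals among the $S_j$ share an $Aut_HG$-orbit, and verifying that the orbit argument genuinely consumes three nontrivial blocks, so that $m=3$ is \emph{not} excluded. This last compatibility is exactly what pins the bound at $\{2,3\}$ rather than at $\{2\}$, and is where the value $m\ge 4$ enters essentially.
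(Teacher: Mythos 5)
Your strategy is the same as the paper's: assume $[G:N]\ge 4$, build five pairwise non-isomorphic NRTs from the product $TL$ by twisting with a fixed $h\in H\setminus\{1\}$, and separate them using the $Aut_HG$-invariance of the product form together with Proposition \ref{p1'} and Proposition \ref{p1}(iii). Your first four classes are sound: $S_1=TL$ and the three nested block twists (twisting one, two, and three of the blocks $l_2,l_3,l_4$) are pairwise non-isomorphic exactly by the product/non-product dichotomy and the pigeonhole on the \emph{number} of twisted blocks. The gap is the fifth class. You insist that every one of $S_2,\dots,S_5$ contains $T$ and satisfies $N\cap S_j=T$, so the only data distinguishing them is \emph{which} subset of the three available blocks is twisted; any four distinct nonempty subsets of $\{2,3,4\}$ must include two of equal cardinality, and your separating argument (an $f\in Aut_HG$ with $f(S_j)=S_{j'}$ fixes $t$ and permutes the $N$-blocks, so a contradiction arises only from a mismatch in the twisted-block count) says nothing about two twists of equal size. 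Indeed they can genuinely be isomorphic: nothing prevents some $f\in Aut_HG$ from fixing $h$ and $t$ and interchanging the cosets $Nl_2$ and $Nl_3$, which carries the twist at block $2$ onto the twist at block $3$. Corollary \ref{p2c} cannot rescue this either: it is a pure existence statement, and its conclusion (contains $T$, not of the form $TL'$) is already satisfied by your own $S_2$, so the NRT it produces may coincide with, or lie in the same $Aut_HG$-orbit as, a transversal you already have; it cannot be forced to yield a new isomorphism class.

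The missing idea is precisely the move you discarded at the outset: twisting \emph{inside} $N$. The paper's five transversals are $S=TL$, $S_1=(S\setminus\{t\})\cup\{ht\}$, and the three block twists. You reasoned that $[N:H]=2$ ``offers only the single nontrivial $t$,'' but the point is not to vary the subgroup $T$; replacing the representative $t$ by $ht$ produces an NRT whose trace on $N$ is $\{1,ht\}\neq T$ --- a different element of $\mathcal{T}(N,H)$ (which need not be a subgroup), and that different trace is a genuinely new separating invariant. Any $f\in Aut_HG$ carrying $S_1$ onto one of the other four must satisfy $f(\{1,ht\})=\{1,t\}$, hence $f(ht)=t$ and $f(t)=f(h)^{-1}t\neq t$, whereas matching any untwisted block $\{l_k,tl_k\}$ to an untwisted block $\{l_j,tl_j\}$ forces $f(t)=t$ (in either matching, since $t^2=1$); the paper exploits exactly this tension, together with the block counting and the arguments it cites from \cite[Lemma 2.12, p.2030]{viv2}, to separate the within-$N$ twist from the rest. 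As written, your construction yields only four isomorphism classes, which is no contradiction to $|\mathcal{I}(G,H)|=4$; the fifth class must come from varying the trace on $N$, not from further block twists.
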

\begin{proof} If possible, assume that $[G:N] \geq 4$. Let $T=\{1, x\} \in \mathcal{T}(N,H)$ which is a subgroup of $N$. Let $L=\{1,l_2,l_3,l_4,...,l_r\} \in \mathcal{T}(G,N)$ and $h \in H \setminus \{1\}$.
Consider $S=TL$, $S_1=(S \setminus \{x\}) \cup \{hx\}$, $S_2=(S \setminus \{l_r\}) \cup \{hl_r\}$, $S_3=(S \setminus \{l_{r-1},l_r\}) \cup \{hl_{r-1},hl_r\}$ and $S_4=(S \setminus \{l_{r-2},l_{r-1},l_r\}) \cup \{hl_{r-2},hl_{r-1},hl_r\}$. As argued in the proof of \cite[Lemma 2.12, p.2030]{viv2}, $S$ and $S_i$ $(1 \leq i \leq 3)$ are non-isomorphic NRTs.

As argued in the second paragraph of the proof of \cite[Lemma 2.12, p.2030]{viv2}, we can show that $S_4$ is not a subgroup of $G$ and $S_4 \neq T^{\prime}L^{\prime}$ for any $T^{\prime} \in \mathcal{T}(N,H)$ and $L^{\prime} \in \mathcal{T}(G,N)$. This shows that $S_4 \ncong S$. Now, we show that $S_4 \ncong S_i$ $(1 \leq i \leq 3)$. Assume that $S_4 \cong S_3$. Then by Proposition \ref{p1'}, there exists $f \in Aut_HG$ such that $f(S_3)=S_4$. Since $N$ and $H$ are $Aut_HG$-invariant, $f(x)=x$. Further, since $[G:N] \geq 4$, there exist $i \in \{0,1 \}$ and $k \in \{2 \cdots r-3 \}$ such that $f(x^il_k)=hl_j$ for some $j \in \{r-2,r-1,r\}$. Hence $f(x^{i+1}l_k)=f(x)hl_j=xhl_j \notin S_4$, a contradiction. Similarly, $S_4 \ncong S_i$ $(i=1,2)$. Thus $|\mathcal{I}(G,H)|>4$, a contradiction.       
\end{proof}
\begin{proposition} \label{prn1} Let $(G,H)$ be a minimal counterexample. Then $\langle S \rangle = G$ for all $S \in \mathcal{T}(G,H)$.
\end{proposition}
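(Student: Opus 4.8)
The plan is to argue by contradiction. Suppose $(G,H)$ is a minimal counterexample and that some $S \in \mathcal{T}(G,H)$ satisfies $\langle S \rangle \neq G$. By Proposition \ref{p1}(iii) we then have $H_S=\{1\}$, so $S$ is a subgroup of $G$; being simultaneously an NRT, $K:=S$ is a complement of $H$ in $G$, that is $G=HK$ and $H\cap K=\{1\}$. The first point I would record is that complements restrict nicely to invariant overgroups: for any $Aut_HG$-invariant subgroup $N$ with $H\subseteq N$, Dedekind's modular law gives $N=N\cap HK=H(N\cap K)$ with $H\cap(N\cap K)=\{1\}$, so $N\cap K$ is a subgroup of $N$ lying in $\mathcal{T}(N,H)$. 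Thus every $Aut_HG$-invariant overgroup of $H$ automatically carries a subgroup transversal, which is exactly the hypothesis that feeds Lemmas \ref{l2} and \ref{ln1}.

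The configuration I aim to reach is a normal subgroup $N$ with $[N:H]=2$, $[G:N]=3$, and $H$ abelian. Once this is in hand the argument closes quickly: Lemma \ref{l2} forbids $[G:N]=2$ and Lemma \ref{ln1} forbids $[G:N]\ge 4$ (each via the five pairwise non-isomorphic NRTs they construct), so $[G:N]=3$ and hence $[G:H]=6$; then Lemma \ref{l2'}(ii) forces $G\cong Alt(4)$ with $H\cong C_2$ or $G\cong Alt(4)\times C_2$ with $H\cong C_2\times C_2$; finally Lemma \ref{l} gives $|\mathcal{I}(G,H)|=5$ in the first case and Lemma \ref{l1} gives $|\mathcal{I}(G,H)|>4$ in the second, contradicting $|\mathcal{I}(G,H)|=4$. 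The companion possibility $[G:N]=2$ of Lemma \ref{l2'}(i), producing $G\cong D_8$, is excluded identically through Lemma \ref{l1'}, which yields $|\mathcal{I}|=6$.

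The substantive work, and the step I expect to be the main obstacle, is reducing the general situation to this target configuration. I would take $N$ minimal among the $Aut_HG$-invariant subgroups properly containing $H$ and analyse $\mathcal{I}(N,H)$ using the minimality of $(G,H)$: when $N\subsetneq G$ we have $|\mathcal{I}(N,H)|\neq 4$, so by the results recalled in the Introduction (the value $2$ never occurs, and $3$ forces index $3$) either $H\trianglelefteq N$ or $[N:H]=3$, and these outcomes must be sieved against Lemma \ref{l2}. Pinning $[N:H]$ down to $2$ — while ruling out the degenerate case in which $H$ is already maximal among invariant subgroups, so that no proper $N$ is available and the complement $K$ must be exploited directly — is where the bulk of the case analysis concentrates; here I would lean on Proposition \ref{p2} and Corollary \ref{p2c} to manufacture NRTs that are not products $TL$, and hence extra isomorphism classes (using Proposition \ref{p1'} to keep track of $Aut_HG$-orbits), whenever the indices fail to collapse to the favourable case. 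Securing the two prerequisites for Lemma \ref{l2'}, namely that $N$ may be chosen normal and that $H$ is abelian, should then follow from the regular action of $K$ on the $[G:H]$ cosets together with the tight numerical constraints forced by the reduction.
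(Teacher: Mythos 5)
Your opening and your endgame both match the paper: starting from $\langle S'\rangle\neq G$, Proposition \ref{p1}(iii) makes $S'$ a subgroup, the Dedekind argument puts a subgroup transversal $K=S'\cap N$ in $\mathcal{T}(N,H)$ for any invariant $N\supseteq H$, and once one knows $[N:H]=2$ with $H$ abelian, Lemmas \ref{ln1}, \ref{l2'}, \ref{l1'}, \ref{l} and \ref{l1} close the argument exactly as you describe. The problem is that the step you yourself defer as ``the main obstacle'' --- getting to $[N:H]=2$ and $H$ abelian --- is the actual content of the proof, and the tools you propose for it do not produce it. The paper's proof turns on a dichotomy you never formulate: either \emph{every} member of $\mathcal{T}(N,H)$ is a subgroup of $N$, or some $U\in\mathcal{T}(N,H)$ is not. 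In the first case, the cited result \cite[Lemma 2.4, p.1719]{viv1} immediately yields $N\cong H\rtimes C_2$ with $H$ abelian --- that, and not ``the regular action of $K$ on the cosets together with tight numerical constraints,'' is where $[N:H]=2$ and abelianness come from; a complement acting regularly on $H\backslash G$ imposes no such bound on $[N:H]$.

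In the second case no reduction to $[N:H]=2$ is available at all, and the paper instead derives a contradiction directly by exhibiting five pairwise non-isomorphic NRTs: $S_1=S'$, $S_2=UL_1$, an $S_3\supseteq U$ with $S_3\neq UL$ (Corollary \ref{p2c}), $S_4^{\epsilon}$ built from $K$, and the hybrid $S_5=(UL_1\setminus U)\cup K$; most of the proof consists of showing, via Proposition \ref{p1'} and explicit coset computations, that $S_5$ is not of the form $K'L'$ and is not isomorphic to any of the others. Your substitute sieve --- minimality gives $|\mathcal{I}(N,H)|\in\{1,3\}$, hence $H\trianglelefteq N$ or $[N:H]=3$ --- cannot fill this hole: $H\trianglelefteq N$ only says all transversals in $\mathcal{T}(N,H)$ are isomorphic \emph{as right loops} to $N/H$, not that they are subgroups of $N$, so the non-subgroup case (which is exactly where the five-transversal construction is needed) remains untouched, and Proposition \ref{p2}/Corollary \ref{p2c} alone manufacture only one extra class, not the four needed beyond $S_1$. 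As it stands the proposal is a plan whose central case is missing rather than a proof.
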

\begin{proof}
On the contrary, assume that there exists $ S^{\prime} \in \mathcal{T}(G,H)$ such that $\langle S^{\prime} \rangle \neq G$. Then $S^{\prime}$ is a subgroup of $G$ (Proposition \ref{p1}(iii)). Thus $K=S^{\prime} \cap N \in \mathcal{T}(N,H)$ is a subgroup of $N$. Further, assume that all the members of $\mathcal{T}(N,H)$ are subgroups of $N$. This implies that $N \cong H \rtimes C_2$ and $H$ is abelian (\cite[Lemma 2.4, p.1719]{viv1}). By Lemma \ref{ln1}, $[G:N]=2$ or $[G:N]=3$.

Assume that $[G:N]=2$. By Lemma \ref{l2'}(i), $G \cong D_8$ and $H \cong C_2$. But in this case, $|I(G,H)|=6$ (Lemma \ref{l1'}), a contradiction.
Thus $[G:N]=3$. Hence $G$ is isomorphic to a subgroup of $Sym(6)$. By Lemma \ref{l2'}(ii), the choices for the pair $(G,H)$ in this case are $G \cong Alt(4) \times C_2$, $H \cong C_2\times C_2$ or $G \cong Alt(4)$, $H \cong C_2$. By Lemmas \ref{l1} and \ref{l}, $|\mathcal{I}(G,H)| >4$, again a contradiction. Thus, there exists $U \in \mathcal{T}(N,H)$ which is not a subgroup of $G$. 

Let $L_1 \in \mathcal{T}(G,N)$, $S_1=S^{\prime}$ and $S_2=UL_1$. By Corollary \ref{p2c}, there exists $S_3 \in \mathcal{T}(G,H)$ such that $U \subseteq S_3$ and $S_3 \neq UL$ for any $L \in \mathcal{T}(N,H)$. Also, let $S_4=KL_1$, $S_4^{\prime}=(S_4 \setminus\{l\}) \cup \{hl\}$, where $h \in H \setminus \{1\}$ and $l \in L_1 \setminus \{1\}$. Let $S_5=(UL_1 \setminus U) \cup K$. Let ${S_4}^{\epsilon}$ denote $S_4$ if it is not subgroup of $G$, otherwise it is $S_4^{\prime}$. As argued in the paragraphs three and four of the proof of \cite[Lemma 2.16, p.2032]{viv2}, $S_1$, $S_2$, $S_3$ and $S_4^{\epsilon}$ are pairwise non-isomorphic NRTs and each of $S_2$, $S_3$ and $S_4^{\epsilon}$ generates $G$. We show that $S_5$ is not isomorphic to $S_i$ $(1 \leq i \leq 4)$.

The NRT $S_5$ is not a subgroup of $G$, for if $u \in U \setminus K$, then $l,ul \in S_5$, but $u=(ul)l^{-1} \notin S_5$.  

Next, assume that $S_5 = K^{\prime}L^{\prime}$, for some $K^{\prime} \in \mathcal{T}(N,H)$ and $L^{\prime} \in \mathcal{T}(G,N)$. Then $K^{\prime}=N \cap S_5=K$. Fix $u \in U \setminus K$. If possible, assume that $ku \in U$ for all $k \in K \setminus U$. Then $ku \in U \setminus K$ for all $k \in K \setminus U$ ( since $K$ is a subgroup of $G$). Thus the map $k \mapsto ku$ is a bijection from $K \setminus U$ to $U \setminus K$. But, this is a contradiction, for $u \in U\setminus K$ is not an image under this map. Thus, there exists $k \in K \setminus U$ such that $ku \notin U$. Fix such a $k \in K \setminus U$.

Since $k^{\prime}l, u^{\prime}l \in S_5$ ($k^{\prime} \in K \cap U, u^{\prime} \in U \setminus K$) are in the same right coset of $N$ in $G$ and $L^{\prime} \subseteq S_5$, either $k^{\prime}l \in L^{\prime}$ for some $k^{\prime} \in K \cap U$ or $u^{\prime}l \in L^{\prime}$ for some $u^{\prime} \in U \setminus K$. Suppose that $u^{\prime}l \in L^{\prime}$ for some $u^{\prime} \in U \setminus K$. Then as argued in the above paragraph, there exists $k^{\prime} \in K \setminus U$ such that $k^{\prime}u^{\prime} \notin U$. This implies that $k^{\prime}(u^{\prime}l) \notin S_5$, which is a cotntradiction. Thus $k^{\prime}l \in L^{\prime}$ for some $k^{\prime} \in K \cap U$. But in this case, for any $u^{\prime} \in U \setminus K$, $u^{\prime}l \in S_5$ can not be written as a product of a member of $K$ and a member of $L^{\prime}$. This is again a contradiction. Thus $S_5 \neq K^{\prime}L^{\prime}$ for any $K^{\prime} \in \mathcal{T}(N,H)$ and for any $L^{\prime} \in \mathcal{T}(G,N)$. Hence $S_5$ is neither isomorphic to $S_2$ nor isomorphic to $S_4$ (if $S_4^{\epsilon}=S_4$). If possible assume that $S_5 \cong S_3$. Then by Proposition \ref{p1'} there exists $f \in Aut_HG$ such that $f(S_5) =S_3$. Since $N$ is an $Aut_HG$-invariant subgroup of $G$, $f(K)=U$. This is a contradiction (for $U$ is not a subgroup of $G$).

Finally, assume that $S_4^{\epsilon}=S_4^{\prime}$ and $S_5 \cong S_4^{\prime}$. Since $\langle S_4^{\prime} \rangle = \langle S_5 \rangle=G$, by Proposition \ref{p1'} there exists $f \in Aut_HG$ such that $f(S_4^{\prime})=S_5$. As $N$ is $Aut_HG$-invariant, $f(K)=f(S_4^{\prime} \cap N)=S_5 \cap N=K$. Since $S_4$ is a subgroup of $G$, $S_5^{\prime}=(S_5 \setminus \{f(hl)\}) \cup \{f(l)\}$ is also a subgroup of $G$. We claim that $f(hl) \neq l$. Suppose that $f(hl)=l$. Then $f(l)=h_1l$, where $h_1=f(h)^{-1} \in H$. Since $k, ul \in S_5^{\prime}$, but $k(ul) \notin S_5^{\prime}$, a contradiction (for $S_5^{\prime}$ is a subgroup of $G$). Thus $f(hl) \neq l$. Since $f(hl) \notin K$, there exists $u_1 \in U$ and $l_1 \in L_1$ such that $f(hl)=u_1l_1$. By Lemma \ref{l2}, $|L_1| \geq3$.  Let $l_2 \in L_1 \setminus \{1,l_1\}$. Then $k,ul_2 \in S_5^{\prime}$, but $k(ul_2) \notin S_5^{\prime}$, a contradiction.

\end{proof}
\begin{proposition} \label{prn2} Let $(G,H)$ be a minimal counterexample and $S \in \mathcal{T}(G,H)$. Then $S$ is indecomposable.
\end{proposition}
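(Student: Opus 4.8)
The plan is to argue by contradiction: suppose some $S\in\mathcal{T}(G,H)$ is decomposable, say $S\cong A\times B$ as right loops with $A$ and $B$ both nontrivial, and deduce $|\mathcal{I}(G,H)|>4$. By Proposition \ref{prn1} we have $\langle S\rangle=G$, so Proposition \ref{p1}(ii) lets me identify $(G,H)$ with $(G_SS,G_S)$, with $S$ an NRT of $G_S$ generating $G_SS$. Since the group torsion and the group generated are multiplicative with respect to direct products of right loops (which is straightforward to check from the definitions, as the maps $xy(x\circ y)^{-1}$ and the permutation action both decompose coordinatewise), this identification carries $S\cong A\times B$ to an internal direct product $G=G_1\times G_2$ with $H=H_1\times H_2$, where $H_i=H\cap G_i$ and the induced transversals $S_i:=S\cap G_i\in\mathcal{T}(G_i,H_i)$ satisfy $\langle S_i\rangle=G_i$ and $H_i\neq G_i$ (as $A,B$ are nontrivial). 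Because $Core_G(H)=Core_{G_1}(H_1)\times Core_{G_2}(H_2)=\{1\}$, each $H_i$ is corefree in $G_i$; and if some $H_i$ were normal in $G_i$ then, $G_1$ and $G_2$ centralising each other, $H$ would be normal in $G$, contradicting $|\mathcal{I}(G,H)|=4\neq 1$ (Main Theorem of \cite{rjpf}). Hence each nontrivial $H_i$ is a non-normal corefree subgroup of $G_i$.

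I would then exploit minimality. Each $G_i$ is a \emph{proper} subgroup of $G$ (the other factor being nontrivial), so $(G_i,H_i)$ is not a minimal counterexample; together with the classification recalled in the introduction ($|\mathcal{I}|=2$ is impossible by \cite{viv1}, and $|\mathcal{I}|=3$ forces index $3$ by \cite{viv2}) this forces $|\mathcal{I}(G_i,H_i)|\in\{3\}\cup\{n:n\ge 5\}$, i.e. $|\mathcal{I}(G_i,H_i)|\ge 3$, whenever $H_i$ is nontrivial. I would split into two cases. In the non-degenerate case, where both $H_1$ and $H_2$ are nontrivial, the direct-product transversals $S_1\times S_2$ with $S_i$ ranging over $\mathcal{T}(G_i,H_i)$ already produce at least $|\mathcal{I}(G_1,H_1)|\cdot|\mathcal{I}(G_2,H_2)|\ge 9$ isomorphism classes, comfortably exceeding $4$. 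The degenerate case is when one factor, say $H_2$, is trivial, so that $B=S_2=G_2$ is a nontrivial group and $H=H_1\le G_1\trianglelefteq G$ with $H\backslash G\cong(H\backslash G_1)\times G_2$; here $|\mathcal{I}(G_1,H)|\ge 3$ by the same argument, but the second coordinate provides no internal freedom, so more care is needed.

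To finish I would use Proposition \ref{p1'} (equivalently Remark \ref{r1}(ii)), valid because $H$ is corefree and every transversal in sight generates $G$, which reduces non-isomorphism to lying in distinct $Aut_HG$-orbits and lets me separate classes by invariants such as the orders of elements or the isomorphism type of $S\cap G_1$. In the degenerate case I would transport the $\ge 3$ classes of $\mathcal{T}(G_1,H)$ into $\mathcal{T}(G,H)$ via $S_1\mapsto S_1\times G_2$ and adjoin two further non-product transversals obtained by replacing selected representatives $s$ by $hs$ with $1\neq h\in H$, exactly in the spirit of Lemmas \ref{l2} and \ref{ln1} (using the intermediate subgroup $N=G_1\times H_2$ together with Corollary \ref{p2c}), pushing the count past $4$. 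The main obstacle is precisely this separation step: in the non-degenerate case it amounts to a cancellation, Krull--Schmidt-type control over direct products of right loops (ruling out accidental isomorphisms $S_1\times S_2\cong S_1'\times S_2'$ without $S_i\cong S_i'$), and in the degenerate case to verifying that the adjoined twisted transversals are not of the form $T'L'$ with $T'\in\mathcal{T}(N,H)$, $L'\in\mathcal{T}(G,N)$, and that $N$ can be taken $Aut_HG$-invariant so that the cited lemmas apply.
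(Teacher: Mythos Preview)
Your setup matches the paper's: use Proposition~\ref{prn1} and Proposition~\ref{p1}(ii) to identify $(G,H)$ with a direct product $(\prod_i G_{S_i}S_i,\prod_i G_{S_i})$. But your contradiction strategy diverges, and the gaps you flag are real. First, a small slip: ``$H_i$ normal in $G_i$ implies $H$ normal in $G$'' is false in general (normality of one factor says nothing about the other); what actually gives the conclusion is that $H_i$ corefree and normal forces $H_i=\{1\}$, which you already have. More seriously, your non-degenerate bound of $\ge 9$ classes does not follow from Remak--Krull--Schmidt: when the two factors share isomorphic indecomposable pieces, products $S_1\times S_2$ and $S_1'\times S_2'$ can be isomorphic without $S_i\cong S_i'$ coordinatewise. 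One can still squeeze out $>4$ classes with extra work (vary one coordinate at a time and use cancellation, which Krull--Schmidt does give), but that is not what you wrote. Finally, your degenerate case is only a sketch, and the obstacle you yourself name---that $N=G_1$ need not be $Aut_HG$-invariant---is genuine, so Corollary~\ref{p2c} and Lemma~\ref{ln1} are not directly available.

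The paper sidesteps all of this with a single idea you are missing. After showing $|\mathcal{I}(G_{S_k}S_k,G_{S_k})|\le 4$ (varying only the $k$th coordinate and invoking Krull--Schmidt cancellation, precisely the step that \emph{does} work), minimality of $|G|$ gives $\le 3$, hence $\in\{1,3\}$ by \cite{viv1}. In either case there is a \emph{group} transversal $T_k\in\mathcal{T}(G_{S_k}S_k,G_{S_k})$. Then $T=\prod_k T_k\in\mathcal{T}(G,H)$ is a group; by Proposition~\ref{prn1} $\langle T\rangle=G$, so Proposition~\ref{p1}(ii) yields $H\cong G_T=\{1\}$, a contradiction. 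This handles your degenerate case for free (there $T_k=S_k$ is already a group) and avoids any delicate counting of isomorphism classes of product transversals.
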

\begin{proof} If possible, suppose that $S$ is decomposable and $S=S_1 \times S_2 \times \cdots \times S_n$ ($n \geq 2$) is a Remak-Krull-Schimdt decomposition of $S$ (see \cite[Theorem 1.11, p.648]{rjpf}). By Proposition \ref{prn1} and \cite[Remark 2.4, p. 650]{rjpf}, we may identify $(G,H)$ with $(G_{S_1}S_1 \times G_{S_2}S_2 \times \cdots \times G_{S_n}S_n,G_{S_1} \times G_{S_2} \times \cdots \times G_{S_n})$. We claim that $|\mathcal{I}(G_{S_i}S_i,G_{S_i})| \leq 4$ $(1 \leq i \leq n)$. If possible, assume that there exists $k$ $(1 \leq k \leq n)$ such that $|\mathcal{I}(G_{S_k}S_k,G_{S_k})| > 4$. Let $T_1, T_2, T_3, T_4, T_5$ $\in \mathcal{T}(G_{S_k}S_k,G_{S_k})$ be pairwise non-isomorphic NRTs. Then $L_i=S_1 \times \cdots \times S_{k-1} \times T_i \times S_{k+1}\times \cdots \times S_n$ $(1 \leq i \leq 5)$ are pairwise non-isomorphic NRTs of $H$ in $G$ by Remak-Krull-Schimdt Theorem (see \cite[Theorem 1.11, p.648]{rjpf}), a contradiction.

Since $(G,H)$ is a minimal counterexample, $|\mathcal{I}(G_{S_k}S_k,G_{S_k})| \leq 3$ for all $k \in \{1 \cdots n\}$. Further, since by \cite[Theorem, p. 1718]{viv1} $|\mathcal{I}(G_{S_k}S_k,G_{S_k})| \neq 2$ for all $k \in \{1 \cdots n\}$, either $|\mathcal{I}(G_{S_k}S_k,G_{S_k})| =1$ or $|\mathcal{I}(G_{S_k}S_k,G_{S_k})|=3$. In either case, we get $T_k \in \mathcal{T}(G_{S_k}S_k,G_{S_k})$ which is a group. Let $T=T_1 \times T_2 \times \cdots \times T_n$. Then $T \in \mathcal{T}(G,H)$ and is a group. Since $\langle T \rangle=G$ (Proposition \ref{prn1}), by Proposition \ref{p1}(ii), $H=G_T=\{1\}$, a contradiction. 
\end{proof}
\section{Proof of the Theorem}
In this section, we study some more properties of a minimal counterexample and reduce it to the case of a finite non-abelian simple group. Then we apply the classification of finite simple groups (the knowledge of the order of automorphism groups of finite non-abelian simple groups) to complete the proof of the Main Theorem.
\begin{proposition}\label{prn3} Let $(G,H)$ be a minimal counterexample. Then $G$ is indecomposable.
\end{proposition}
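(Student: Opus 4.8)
The plan is to argue by contradiction: suppose $G=A\times B$ with $A,B$ nontrivial, and let $p\colon G\to A$ and $q\colon G\to B$ be the two projections. The first step is to turn Proposition \ref{prn1} into a constraint on the position of $H$ in $G$. I claim that no proper subgroup $M<G$ can satisfy $HM=G$: otherwise a normalized transversal $S_0$ of $H\cap M$ in $M$ is also a normalized transversal of $H$ in $G$ — its elements lie in pairwise distinct right cosets of $H$ (two of them differing by an element of $H\cap M$ would already coincide in $M$), and they meet every right coset of $H$ because $HM=G$ — yet $\langle S_0\rangle\subseteq M\subsetneq G$, contradicting Proposition \ref{prn1}. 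Thus $H$ lies in every maximal subgroup of $G$, i.e. $H\le\Phi(G)$.

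Because $\Phi(G)=\Phi(A)\times\Phi(B)$, this yields $p(H)\le\Phi(A)\subsetneq A$ and $q(H)\le\Phi(B)\subsetneq B$, so $H$ is contained in the proper subgroup $\Phi(A)\times\Phi(B)$ and surjects onto neither factor; in particular the subdirect and diagonal embeddings of $H$ are excluded at once. I would also record that $H\cap A$ is corefree in $A$ (and symmetrically $H\cap B$ in $B$): the core of $H\cap A$ in $A$ is normal in $A$ and centralized by $B$, hence normal in $G=AB$ and contained in $H$, so it is trivial by Proposition \ref{p1}(i). Since $A$ is nontrivial, $H\cap A\subsetneq A$, and likewise $H\cap B\subsetneq B$.

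The heart of the matter is to upgrade $G=A\times B$ to a decomposition compatible with $H$, namely $H=(H\cap A)\times(H\cap B)$. Once this holds, choosing $S_A\in\mathcal{T}(A,H\cap A)$ and $S_B\in\mathcal{T}(B,H\cap B)$ and setting $S=\{(a,b):a\in S_A,\ b\in S_B\}$ produces $S\in\mathcal{T}(G,H)$ whose induced right-loop operation is exactly the direct product of those of $S_A$ and $S_B$ (since $H=(H\cap A)\times(H\cap B)$ makes each coset of $H$ a product of cosets); as $H\cap A\subsetneq A$ and $H\cap B\subsetneq B$ both factors are nontrivial, so $S$ is a decomposable right loop, contradicting Proposition \ref{prn2}. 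To force compatibility I would pass to a Remak--Krull--Schmidt decomposition of $G$ and collect the indecomposable factors into their isotypic components (or, more coarsely, into coprime Hall components); each such component is characteristic, hence $Aut_HG$-invariant, and if a proper one $B_0$ exists then $N=HB_0=p_0(H)\times B_0$ is a proper $Aut_HG$-invariant subgroup with $H<N<G$, to which Proposition \ref{p2}, Corollary \ref{p2c}, and Lemmas \ref{l2} and \ref{ln1} can be applied to manufacture five pairwise non-isomorphic transversals. The one configuration resisting this is $G\cong D^{\,n}$ with $Aut_HG$ permuting the $n$ isomorphic indecomposable factors transitively, where no proper characteristic factor is available; here I would exploit that each projection satisfies $p_i(H)\le\Phi(D)\subsetneq D$ (again a consequence of $H\le\Phi(G)$) to rule out a transitive diagonal action and recover the compatible splitting. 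Establishing this last point rigorously is the step I expect to be the main obstacle.
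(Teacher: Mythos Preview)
Your opening is sound and actually cleaner than the paper's: the deduction $H\le\Phi(G)$ from Proposition~\ref{prn1} is correct (an NRT of $H\cap M$ in $M$ is an NRT of $H$ in $G$ whenever $HM=G$), and together with $\Phi(A\times B)=\Phi(A)\times\Phi(B)$ it instantly disposes of the case where $H$ surjects onto a factor. The gap is exactly where you place it, and the tools you name do not close it. Lemma~\ref{l2} only forbids $[G:N]=2$ under a complement hypothesis, Lemma~\ref{ln1} only applies when $[N:H]=2$, and neither manufactures five non-isomorphic transversals from a generic proper $Aut_HG$-invariant $N$ with $H<N<G$; squeezing a contradiction out of such an $N$ requires the long case analysis carried out later in Proposition~\ref{p5}, which you have not reproduced here. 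In the isotypic case $G\cong D^{\,n}$ you offer no argument at all; ``$p_i(H)\le\Phi(D)$ rules out a transitive diagonal action'' is a hope, not a proof that a compatible splitting of $H$ exists, and you concede as much in your last sentence.

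The paper sidesteps the whole issue by never looking for a compatible decomposition. Fixing any decomposition $G=G_1\times G_2$ with projections $\pi_i$ and $U_i=\pi_i(H)$, it does a three-way split. If $H\cap G_1=U_1$ then $H=(H\cap G_1)(H\cap G_2)$ and a product transversal is decomposable, contradicting Proposition~\ref{prn2}. If $U_1=G_1$ and $U_2=G_2$, one checks (using $Core_G(H)=\{1\}$) that $H\cap G_2=\{1\}$ and $G_2\in\mathcal T(G,H)$, contradicting Proposition~\ref{prn1}; your Frattini step handles this case too. In the remaining case $H\cap G_1\subsetneq U_1\subsetneq G_1$, the paper invokes the explicit construction of \cite[Proposition~2.6, last paragraph]{rjpf} to build a decomposable $S\in\mathcal T(G,H)$ directly, again contradicting Proposition~\ref{prn2}. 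The idea you are missing is that a decomposable transversal can be produced from \emph{any} nontrivial factorization as soon as one projection of $H$ is proper; realigning the decomposition with $H$ is unnecessary.
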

\begin{proof} If possible, suppose that $G$ is decomposable. Let $G_1$ and $G_2$ be nontrivial proper normal subgroups of $G$ such that $G=G_1G_2$ and $G_1 \cap G_2=\{1\}$. Let $\pi_i:G \rightarrow G_i$ $(i=1,2)$ be projections. Let $\pi_i(H)=U_i$ $(i=1,2)$. The restriction $\pi_i|_H$ of $\pi_i$ to $H$ induces isomorphism $\sigma_i:H/(H \cap G_1)(H \cap G_2)\rightarrow (U_i/(H \cap G_i))$ $(i=1,2)$. This gives an isomorphism $\theta=\sigma_2 \circ \sigma_1^{-1}$ from $U_1/(H \cap G_1)$ to $U_2/(H \cap G_2)$ given by $\theta(\pi_1(h)(H \cap G_1))=\pi_2(h)(H \cap G_2)$, $h \in H$. Also

\[H=\{u_1u_2 \in U_1U_2|\theta(u_1(H \cap G_1))=u_2(H \cap G_2)\}.\]

\noindent Since $Core_G(H)=\{1\}$ (Proposition \ref{p1}(i)), $H \cap G_i \neq G_i$ for $i=1,2$. Suppose that $H \cap G_1=U_1$. Then the isomorphism $\theta$ implies $H \cap G_2=U_2$. Now as argued in the second paragraph of the proof of \cite[Proposition 2.6, p. 650]{rjpf}, replacing \cite[Proposition 2.5]{rjpf} by Proposition \ref{prn2}, we get an $S \in \mathcal{T}(G,H)$ which is a direct product of nontrivial right loops $S_1$ and $S_2$. This is a contradiction (Proposition \ref{prn2}).

We may now assume that $H \cap G_i \neq U_i$ $(i=1,2)$. Suppose that $U_1=G_1$ and $U_2=G_2$. Then as argued in the third paragraph of the proof of \cite[Proposition 2.6, p. 650]{rjpf} replacing \cite[Corollary 2.3]{rjpf} by Proposition \ref{p1}(i), we get $G_2 \in \mathcal{T}(G,H)$, a contradiction (Proposition \ref{prn1}).

Thus, we may assume that $H \cap G_i \neq U_i$ $(i=1,2)$ and $U_1 \neq G_1$. Then by the same argument as in the last paragraph of the proof of \cite[Proposition 2.6, p. 650]{rjpf}, we get an $S \in \mathcal{T}(G,H)$ which is decomposable. This is a contradiction (Proposition \ref{prn2}).      
\end{proof}
\begin{lemma}\label{p4}
Let $(G,H)$ be a minimal counterexample. Let $N$ be an $Aut_HG$-invariant proper subgroup of $G$ containing $H$ properly. Then

(i) $Aut_HG$ has at most two orbits in $\mathcal{T}(N,H)$,

(ii)$H \trianglelefteq N$ and 

(iii) there are no $K_1, K_2 \in \mathcal{T}(N,H)$ which are in distinct $Aut_HG$-orbits such that $K_1$ is a subgroup of  $N$ but $K_2$ is not a subgroup of $N$.
\end{lemma}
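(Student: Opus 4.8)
The plan is to prove the three assertions in sequence, using the counting/transversal machinery already developed for a minimal counterexample, together with Proposition~\ref{p2} and its Corollary~\ref{p2c}, which control how transversals of $H$ in $G$ decompose through an intermediate $Aut_HG$-invariant subgroup $N$. Throughout I would keep in mind the running constraints on a minimal counterexample: $Core_G(H)=\{1\}$ (Proposition~\ref{p1}(i)), every NRT generates $G$ (Proposition~\ref{prn1}), and $|\mathcal{I}(G,H)|=4$, so in particular $\mathcal{T}(N,H)$ cannot be made to produce five pairwise non-isomorphic NRTs of $H$ in $G$ once lifted.

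\textbf{Part (i): at most two $Aut_HG$-orbits in $\mathcal{T}(N,H)$.} The idea is a lifting-and-counting argument. Suppose $\mathcal{T}(N,H)$ had at least three distinct $Aut_HG$-orbits, with representatives $K_1,K_2,K_3$. Since $N$ is $Aut_HG$-invariant, any $f\in Aut_HG$ sends an NRT of $H$ in $G$ of the form $KL$ (with $K\in\mathcal{T}(N,H)$, $L\in\mathcal{T}(G,N)$) to one whose intersection with $N$ is $f(K)\in\mathcal{T}(N,H)$; thus the $Aut_HG$-orbit of $K=S\cap N$ is an invariant of the isomorphism class of $S$ (by Proposition~\ref{p1'}, isomorphic generating NRTs are $Aut_HG$-conjugate). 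I would lift each $K_i$ to an NRT $S_i=K_iL$ of $H$ in $G$ (using a common $L\in\mathcal{T}(G,N)$), obtaining three pairwise non-isomorphic NRTs distinguished by the orbit of their trace on $N$. Then, invoking Proposition~\ref{p2} or Corollary~\ref{p2c}, I would produce a fourth NRT $S_4$ that is \emph{not} of the product form $TL$ for any $T\in\mathcal{T}(N,H)$, $L\in\mathcal{T}(G,N)$, hence non-isomorphic to each $S_i$; together these give at least four, and a further perturbation (as in the constructions of Lemmas~\ref{l2} and \ref{ln1}) should yield a fifth, contradicting $|\mathcal{I}(G,H)|=4$. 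The main obstacle here is bookkeeping: ensuring the lifts of three distinct orbits really are pairwise non-isomorphic \emph{as NRTs of $H$ in $G$}, not merely as NRTs of $H$ in $N$; this is where the $Aut_HG$-invariance of $N$ and Proposition~\ref{p1'} do the essential work.

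\textbf{Part (ii): $H\trianglelefteq N$.} By part (i), $Aut_HG$ has at most two orbits in $\mathcal{T}(N,H)$. I would analyze the induced action and the resulting bound on $|\mathcal{I}(N,H)|$. Since the restriction map sends $Aut_HG$-orbits in $\mathcal{T}(N,H)$ onto isomorphism data, at most two orbits forces $|\mathcal{I}(N,H)|\le 2$. By \cite[Theorem, p.~1718]{viv1}, $|\mathcal{I}(N,H)|\neq 2$, so $|\mathcal{I}(N,H)|=1$, whence $H\trianglelefteq N$ by \cite[Main Theorem, p.~643]{rjpf}. The delicate point is justifying that two $Aut_HG$-orbits cannot split into three or more distinct $N$-isomorphism classes; I expect this to follow because $Aut_HG$ restricts to automorphisms of $N$ fixing $H$, so isomorphism of traces is coarser than $Aut_HG$-conjugacy, keeping $|\mathcal{I}(N,H)|$ no larger than the orbit count.

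\textbf{Part (iii): no mixed pair across distinct orbits.} This is the cleanest part, using (i) and (ii). With $H\trianglelefteq N$, the coset space $N/H$ is a group, so an NRT $K\in\mathcal{T}(N,H)$ is a subgroup of $N$ precisely when the section $N/H\to N$ it defines is a homomorphism. Being a subgroup is visibly an $Aut_HG$-invariant property (since $f(K)$ is a subgroup iff $K$ is, as $f$ is an automorphism fixing $H$ and preserving $N$). Hence if $K_1$ is a subgroup of $N$ and $K_2$ is not, they cannot lie in the same $Aut_HG$-orbit; but by (i) there are at most two orbits, so if both a subgroup-type and a non-subgroup-type transversal occur, they occupy the two distinct orbits — and this is exactly the configuration I must rule out. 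To finish, I would show this configuration again produces too many non-isomorphic lifts: lifting $K_1$ (a subgroup) and $K_2$ (not a subgroup) to NRTs of $H$ in $G$, together with the non-product NRT from Corollary~\ref{p2c} and a suitable perturbation of the subgroup-type lift (as in Lemma~\ref{ln1}, where the existence of a transversal that is a subgroup was the trigger for forcing $[G:N]\in\{2,3\}$ and then a contradiction), gives five pairwise non-isomorphic classes. The hard part throughout is (iii): I must argue that the presence of \emph{both} a subgroup transversal and a non-subgroup transversal of $H$ in $N$ gives enough freedom to build the fifth inequivalent NRT, and I would expect the argument to route through the earlier structural lemmas (Lemmas~\ref{l2}, \ref{ln1}, \ref{l2'}) that already classify exactly which small pairs $(G,N)$ admit a subgroup transversal, thereby exhausting the surviving cases.
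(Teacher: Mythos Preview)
Your overall strategy is the paper's strategy: use $Aut_HG$-invariance of $N$ so that the trace $S\cap N$ is an orbit-invariant of the isomorphism class (via Proposition~\ref{p1'} and Proposition~\ref{prn1}), and combine product-form lifts $KL$ with the non-product lifts supplied by Corollary~\ref{p2c}. Part~(ii) is fine either your way (bounding $|\mathcal I(N,H)|$ by the number of $Aut_HG$-orbits from~(i)) or the paper's way (assuming $|\mathcal I(N,H)|\ge3$ directly and re-running~(i)); both are short.

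Where your sketch is thinner than the paper is in the actual counting for~(i) and~(iii). In~(i) you produce four classes ($K_1L,K_2L,K_3L$ plus one non-product $S_4$) and then appeal to an unspecified ``further perturbation'' for a fifth. The paper avoids this by applying Corollary~\ref{p2c} to \emph{each} of $K_1,K_2,K_3$, obtaining $S_i\supseteq K_i$ with $S_i\neq K_iL'$ for any $L'$; then the six NRTs $K_1L,K_2L,K_3L,S_1,S_2,S_3$ are pairwise non-isomorphic (distinct orbit-traces separate different indices, and for the same index the image of a product $K_iL$ under any $f\in Aut_HG$ is again a product $f(K_i)f(L)$, so it cannot equal $S_i$). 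That is the missing idea in your~(i): don't perturb, just invoke Corollary~\ref{p2c} three times.

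For~(iii) your plan to ``route through Lemmas~\ref{l2}, \ref{ln1}, \ref{l2'}'' does not go through as stated: Lemmas~\ref{ln1} and~\ref{l2'} need $[N:H]=2$, which is not available here. The paper instead writes down five explicit NRTs. With $K_1$ a subgroup, $K_2$ not, $L=\{1,l_2,\dots,l_r\}$ (and $r\ge3$ by Lemma~\ref{l2}), and $h\in H\setminus\{1\}$, take
\[
S_1=K_1L,\quad S_2=K_2L,\quad S_3=(S_1\setminus\{l_r\})\cup\{hl_r\},\quad S_4=(S_1\setminus\{l_{r-1},l_r\})\cup\{hl_{r-1},hl_r\},
\]
and let $S_5\supseteq K_2$ be a non-product NRT from Corollary~\ref{p2c}. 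Orbit-trace separates $\{S_1,S_3,S_4\}$ from $\{S_2,S_5\}$; $S_2\not\cong S_5$ since $f(S_2)=f(K_2)f(L)$ is always a product; and $S_1,S_3,S_4$ are separated by a direct argument using that $K_1$ is a subgroup and $N=HK_1$ (any $f\in Aut_HG$ with $f(S_1)=S_3$ must fix $K_1$, and chasing the preimage of $hl_r$ forces an element $h'k'l_r$ with $h'\neq1$ into $S_3$, a contradiction). So the gap in your~(iii) is the concrete construction and the $S_1/S_3/S_4$ separation; the structural lemmas you cite are not the right tool there.
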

\begin{proof} (i) Assume that $Aut_HG$ has more than two orbits in $\mathcal{T}(N,H)$. Let $K_2,~K_2$ and $K_3$ be in distinct $Aut_HG$-orbits in $\mathcal{T}(N,H)$. Fix $L \in \mathcal{T}(G,N)$. Then $K_1L$, $K_2L$, $K_3L \in \mathcal{T}(G,H)$. By Corollary \ref{p2c}, there exist $S_1$, $S_2$ and $S_3$ in $\mathcal{T}(G,H)$ containing $K_1$, $K_2$ and $K_3$ respectively such that $S_i \neq K_iL^{\prime}$, $(1 \leq i \leq 3)$ for any $L^{\prime} \in \mathcal{T}(G,N)$.

Since there is no $f \in Aut_HG$ such that $f(K_i)=K_j$ $1 \leq i \neq j \leq 3$ and $ \langle S \rangle=G$ for all $S \in \mathcal{T}(G,H)$ (Proposition \ref{prn1}), thus $S_1$, $S_2$, $S_3$, $K_1L$, $K_2L$ and $K_3L$ are pairwise non-isomorphic NRTs in $\mathcal{T}(G,H)$, a contradiction. 

(ii) Suppose that $H$ is not normal in $N$. Then by \cite[Main Theorem, p.643]{rjpf} and \cite[Theorem, p. 1718]{viv1}, $|\mathcal{I}(N,H)| >2$. Let $K_1$, $K_2$ and $K_3$ be pairwise non-isomorphic NRTs in $\mathcal{T}(N,H)$. As argued in (i), we get $|\mathcal{I}(G,H)| >4$, a contradiction. Thus $H \trianglelefteq N$.

(iii) Let $K_1,K_2 \in \mathcal{T}(N,H)$ be in distinct $Aut_HG$-orbits. If possible, suppose that $K_1$ is a subgroup of $N$ but $K_2$ is not a subgroup of $N$. Let $L=\{1,l_2, \cdots, l_r\} \in \mathcal{T}(G,N)$. By Lemma \ref{l2}, $|L| \geq 3$. Let $h \in H \setminus \{1\}$. Let $S_1=K_1L$, $S_2=K_2L$, $S_3=(S_1 \setminus \{l_r\}) \cup \{hl_r\}$, $S_4=(S_1 \setminus \{l_{r-1},l_r\}) \cup \{hl_{r-1},hl_r\}$. By Corollary \ref{p2c}, there exists $S_5 \in \mathcal{T}(G,H)$ containing $K_2$ such that $S_5 \neq K_2L^{\prime}$ for any $L^{\prime} \in \mathcal{T}(G,N)$. By Proposition \ref{prn1}, $\langle S_i \rangle= G$ for all $i$ $(1 \leq i \leq 5)$. 

We claim that $S_i$ ($1 \leq i \leq 5$) are pairwise non-isomorphic NRTs in $\mathcal{T}(G,H)$. Since $N$ is an $Aut_HG$-invariant subgroup of $G$, $S_i \ncong S_j$ ($i=1,3,4$; $j=2,5$) (for otherwise by Proposition \ref{p1'} $f(K_1)=K_2$ for some $f \in Aut_HG$ ). Assume that $S_1 \cong S_3$. By Proposition \ref{p1'} there exists $f \in Aut_HG$ such that $f(S_1)=S_3$. As $N$ is an $Aut_HG$-invariant subgroup of $G$, $f(K_1)=K_1$. If possible, suppose that $f(l)=hl_r$ for some $l \in L$. Let $k \in K_1 \setminus \{1\}$ and $f(k)=k_1$. Then $f(kl)=k_1(hl_r)$. Since $N=HK_1$ is a subgroup of $G$ and $K_1 \in \mathcal{T}(N,H)$, $k_1h=h^{\prime}k_1^{\prime}$ for some $k_1^{\prime} \in K_1 \setminus \{1\}$ and $h^{\prime} \in H \setminus \{1\}$. This implies that $f(kl)=h^{\prime}(k_1^{\prime}l_r) \in S_3$. This is a contradiction, for $k_1^{\prime}l_r \in S_3$. Therefore $f(k^{\prime}l)=hl_r$ for some $k^{\prime} \in K_1 \setminus \{1\}$. This implies that $f(l)=f(k^{\prime})^{-1}hl_r \in S_3$. As argued above, this gives a contradiction. Hence $S_1 \ncong S_3$. Similar arguments prove that $S_1 \ncong S_4$ and $S_3 \ncong S_4$. Now, assume that $S_2 \cong S_5$. By Proposition \ref{p1'} there exists $f \in Aut_HG$ such that $f(S_2)=S_5$. Since $N$ is $Aut_HG$-invariant subgroup of $G$, $f(K_2)=K_2$ and so $S_5=K_2f(L)$, a contradiction to the choice of $S_5$.  

\end{proof}

\begin{proposition}\label{p5}
 Let $(G,H)$ be a minimal counterexample. Then $G$ is characteristically simple.
 \end{proposition}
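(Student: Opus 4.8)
The plan is to prove that $G$ is characteristically simple by showing that $G$ has no proper nontrivial characteristic subgroup. The natural strategy is to identify the relevant characteristic subgroup with an $Aut_HG$-invariant subgroup, so that the machinery built up in the previous lemmas applies. Recall that any characteristic subgroup of $G$ is in particular invariant under every automorphism of $G$, hence under $Aut_HG$. So suppose, for contradiction, that $M$ is a proper nontrivial characteristic subgroup of $G$. I would first consider the subgroup $MH$ (equivalently $HM$, once we know $M \trianglelefteq G$): since $M$ is characteristic it is $Aut_HG$-invariant, and $H$ is fixed setwise by $Aut_HG$, so $MH$ is an $Aut_HG$-invariant subgroup of $G$ containing $H$.

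The key case analysis would then hinge on the position of $MH$ relative to $H$ and $G$. First I would dispose of the case $MH = H$, i.e. $M \subseteq H$: then $M \subseteq H$ is a nontrivial normal subgroup of $G$ lying inside $H$, so $M \subseteq Core_G(H) = \{1\}$ by Proposition \ref{p1}(i), contradicting $M \neq \{1\}$. Next I would treat the case $MH = G$. Here I would try to produce a transversal built from $M$: since $M \trianglelefteq G$ and $M \cap H$ can be analyzed, the plan is to exhibit an NRT of $H$ in $G$ that is a subgroup (for instance using that $M$ supplies a complement-like structure), contradicting Proposition \ref{prn1} which forces $\langle S \rangle = G$ for every $S$, or alternatively contradicting Proposition \ref{prn3} (indecomposability) if $M$ and a complementary factor split $G$ as a direct product.

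The genuinely substantive case is the intermediate one, $H \subsetneq MH \subsetneq G$, where $N := MH$ is a proper $Aut_HG$-invariant subgroup properly containing $H$. Here Lemma \ref{p4} becomes the main tool: it tells us $H \trianglelefteq N$, that $Aut_HG$ has at most two orbits on $\mathcal{T}(N,H)$, and that we cannot have two orbits split between subgroup-transversals and non-subgroup-transversals. I would combine these constraints with Lemma \ref{l2} (which forbids $[G:N]=2$ when a subgroup-transversal of $N$ over $H$ exists) and Lemma \ref{ln1} to pin down the index $[G:N]$ and the structure of $N$ over $H$, and then invoke Lemma \ref{l2'} together with Lemmas \ref{l1} and \ref{l} to rule out the surviving small configurations ($G \cong D_8$, $Alt(4)$, or $Alt(4)\times C_2$), each of which already contradicts $|\mathcal{I}(G,H)|=4$.

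I expect the main obstacle to be the bookkeeping in the intermediate case: ensuring that the interplay between $M$ being characteristic (hence normal, giving extra structure on $N = MH$) and the transversal-counting constraints of Lemma \ref{p4} genuinely forces a contradiction, rather than merely restricting the structure. In particular, the delicate point will be handling whether $\mathcal{T}(N,H)$ contains a subgroup-transversal: if it does, Lemma \ref{ln1} and Lemma \ref{l2} confine $[G:N]$ to $\{2,3\}$ and the small-group lemmas finish the argument; if every member of $\mathcal{T}(N,H)$ is a subgroup one would apply the structure result $N \cong H \rtimes C_2$ as in the proof of Proposition \ref{prn1}, and if neither extreme holds one must exploit the "at most two orbits, not mixed" conclusion of Lemma \ref{p4}(i) and (iii) to derive the contradiction. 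Reconciling these subcases cleanly, while keeping track of the normality of $M$ throughout, is where the real work lies.
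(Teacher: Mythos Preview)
Your outline has the right skeleton, but the subcase analysis in the intermediate case $H\subsetneq N=MH\subsetneq G$ is confused and hides a genuine gap. By Lemma~\ref{p4}(i),(iii) the dichotomy is clean: either \emph{every} member of $\mathcal{T}(N,H)$ is a subgroup of $N$, or \emph{none} is. Your three subcases collapse to two, and your ``neither extreme'' case is precisely the one where no NRT of $H$ in $N$ is a subgroup; for this you offer only an abstract appeal to Lemma~\ref{p4}, which does not by itself produce a contradiction. In the paper this is the hardest part: one exploits the extra information that the characteristic subgroup $U$ does \emph{not} contain $H$, so that $\mathcal{T}(U,U\cap H)$ and $\mathcal{T}(UH,H)\setminus\mathcal{T}(U,U\cap H)$ are the two $Aut_HG$-orbits, then forces $[U:U\cap H]=2$ and builds five explicit pairwise non-isomorphic NRTs $S_1,\dots,S_5\in\mathcal{T}(G,H)$. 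This construction is the heart of the proof and cannot be replaced by bookkeeping. You also invoke Lemma~\ref{ln1} prematurely: its hypothesis is $[N:H]=2$, not merely the existence of a subgroup-transversal, and you have not established that.

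There is also a structural idea you are missing. The paper does not try to reach a contradiction directly from an arbitrary characteristic subgroup. Instead, the analysis above is used to prove the intermediate conclusion that \emph{every} nontrivial characteristic subgroup of $G$ contains $H$. Only then does one take $N$ to be the \emph{smallest} such; minimality forces $N$ to be a direct product of isomorphic simple groups, which guarantees the existence of a subgroup-transversal in $\mathcal{T}(N,H)$, and now Lemma~\ref{p4}(i),(iii) together with \cite[Lemma~2.4]{viv1} give $[N:H]=2$, after which Lemma~\ref{ln1} and Lemmas~\ref{l2'}, \ref{l1'}, \ref{l}, \ref{l1} finish. Without this two-step reduction you have no mechanism to produce the subgroup-transversal needed to trigger Lemma~\ref{ln1}. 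Finally, your treatment of $MH=G$ is too vague: the paper uses minimality of $(G,H)$ on the pair $(U,U\cap H)$ to force $|\mathcal{I}(U,U\cap H)|=1$ and then invokes the argument of \cite[Proposition~2.8]{viv1}; merely hoping that $M$ ``supplies a complement-like structure'' does not suffice, since a normal subgroup need not have a complement.
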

 \begin{proof}
 
Suppose that $U$ is a nontrivial proper characteristic subgroup of $G$. 
Further, assume that $UH=G$. 
Then since $\mathcal{T}(U, U\cap H) \subseteq \mathcal{T}(UH,H)=\mathcal{T}(G,H)$ and $|U|<|G|$, by minimality of the pair $(G,H)$, $|I(U,U\cap H)|\leq 3$. 
If $|\mathcal{I}(U,U\cap H)|$=3, then $[G:H]=[U:U \cap H]=3$ (see \cite[Theorem A, p.2025]{viv2}). But in this case $|\mathcal{I}(G,H)| = 3$, a contradiction. Therefore, $|\mathcal{I}(U,U\cap H)|=1$ (Since $|\mathcal{I}(U,U\cap H)| \neq 2$ by \cite[Theorem, p. 1718]{viv1}). Also by Proposition \ref{prn1}, $\langle S \rangle= G$ for all $S\in \mathcal{T}(G,H)$. Following the steps of the first paragraph of the proof of \cite[Proposition 2.8, p. 1721]{viv1}, replacing \cite[Proposition 2.1, p. 1718]{viv1} by Proposition \ref{p1}(i), we find that $G\neq UH$, a contradiction to our assumption. Thus $UH \neq G$.

Assume that $H \nsubseteq U$. Then $\mathcal{T}(U,U \cap H) \subsetneqq \mathcal{T}(UH,H)$. Since $U$ and $H$ are $Aut_HG$-invariant, by Lemma \ref{p4}(i), $\mathcal{T}(U,U \cap H)$ and $\mathcal{T}(UH,H) \setminus \mathcal{T}(U,U \cap H)$ are $Aut_HG$-orbits. Let $T_1 \in \mathcal{T}(U,U \cap H)$ and $T_2 \in \mathcal{T}(UH,H) \setminus \mathcal{T}(U,U \cap H)$. By Lemma \ref{p4}(iii), either both $T_1$ and $T_2$ are subgroups of $G$ or both are not subgroups of $G$. 

Assume that both $T_1$ and $T_2$ are subgroups of $G$. Thus each member of $\mathcal{T}(UH,H)$ is a subgroup of $UH$. Hence by \cite[Lemma 2.4, p. 1719]{viv1}, $[UH:H]=2$. 
By Lemma \ref{ln1}, $[G:UH]=2$ or $[G:UH]=3$.

Assume that $[G:UH]=2$. Then $[G:H]=4$. As $Core_G(H)=\left\{1\right\}$ (Proposition \ref{p1}(i)), $G$ is isomorphic to a subgroup of $Sym(4)$. By Lemma \ref{l2'}(i), $G\cong D_8$ and $\left|H\right|=2$. But in this case $|\mathcal{I}(G,H)|=6$ (Lemma \ref{l1'}), a contradiction.

Thus $[G:UH]=3$. Then by Lemma \ref{l2'}(ii), $G \cong Alt(4)$ and $\left|H \right|=2$ or $G \cong Alt(4) \times C_2$ and $H \cong C_2 \times C_2$. But for both choices $|\mathcal{I}(G,H)|>4$ (Lemma \ref{l} and Lemma \ref{l1}), a contradiction.

Thus $T_1$ and $T_2$ are not subgroups of $G$. Further, assume that $[U:U \cap H]>2$. Let $h \in H \setminus U$. Let $u_2,u_3 \in T_1$ be distinct nontrivial elements. Let $T_2^{\prime}=(T_1 \setminus \{u_2\}) \cup \{hu_2\}$ and $T_3=(T_1 \setminus \{u_2,u_3\}) \cup \{hu_2,hu_3\}$. Then $T_2^{\prime}$ and $T_3$ are in same $Aut_HG$-orbit. This is a contradiction for $H$ and $U$ are $Aut_HG$-invariant and $h \notin U$. Thus $[U:U \cap H]=2$. If possible, assume that $[G:HU]=2$. Then $G$ will be isomorphic to a subgroup of $Sym(4)$. The only possibility we have in this case is $G \cong D_8$ and $H$ a non-normal subgroup of order $2$. But, then $|\mathcal{I}(G,H)|=6$ (Lemma \ref{l1'}). This is a contradiction. Thus $[G:HU]>2$.

Let $h \in H \setminus U$. Let $T=\{1,u\} \in \mathcal{T}(U,U \cap H)$ and  $L=\{1,l_2, \cdots,l_{r-1},l_r\}$\\$ \in \mathcal{T}(G,HU)$. Then $T_1=\{1,hu\}  \in \mathcal{T}(UH,H) \setminus \mathcal{T}(U,U \cap H)$. Consider $S_1=TL$, $S_2=T_1L$, $S_3=(S_2 \setminus (L \setminus \{1\})) \cup \{hl|l \in L \setminus \{1\}\}$, $S_4=(S_1 \setminus \{l_r\}) \cup \{hl_r\}$ and $S_5=(S_1 \setminus \{l_{r-1},l_r\}) \cup \{hl_{r-1},hl_r\}$. We claim that $S_i \neq T^{\prime}L^{\prime}$ ($3 \leq i \leq 5$) for any $T^{\prime} \in \mathcal{T}(UH,H)$ and $L^{\prime} \in \mathcal{T}(G,UH)$. If possible, suppose that $S_3 = T^{\prime}L^{\prime}$ for some $T^{\prime} \in \mathcal{T}(UH,H)$ and $L^{\prime} \in \mathcal{T}(G,UH)$. Then  $T_1=S_3 \cap UH=T^{\prime}$. Since $hl_r,hul_r \in S_3$ are in the same right coset of $HU$ in $G$, therefore $hl_r \in L^{\prime}$ or $hul_r \in L^{\prime}$. Assume that $hl_r \in L^{\prime}$. Then $huhl_r \in S_3$. Since $HU$ is a subgroup of $G$ and $[U:U \cap H]=2$, $uh=h^{\prime}u$, for some $h^{\prime} \in H \setminus U$. This implies that $huhl_r=hh^{\prime}ul_r \in S_3$, a contradiction. Thus $hul_r \in L^{\prime}$. Then $hu(hul_r) \in S_3$, a contradiction (for $huhu \neq h$ and $huhu \neq hu$). 

Now, assume that $S_4 = T^{\prime}L^{\prime}$ for some $T^{\prime} \in \mathcal{T}(UH, H)$ and $L^{\prime} \in \mathcal{T}(G,UH)$. Then  $T=S_4 \cap UH=T^{\prime}$. Since $hl_r,ul_r \in S_4$ are in the same right coset of $HU$ in $G$, therefore $hl_r \in L^{\prime}$ or $ul_r \in L^{\prime}$. Assume that $hl_r \in L^{\prime}$. Then $u(hl_r) \in S_4$. As argued in the above paragraph, $u(hl_r)=h^{\prime}ul_r \in S_4$ for some $h^{\prime} \in H \setminus U$, a contradiction. Thus $ul_r \in L^{\prime}$. Then $u^2l_r \in S_4$, which is again a contradiction (for $[U:U \cap H]=2$, $u^2 \in U \cap H$). Similarly, $S_5 \neq T^{\prime}L^{\prime}$ for any $T^{\prime} \in \mathcal{T}(UH,H)$ and $L^{\prime} \in \mathcal{T}(G,UH)$.

We now claim that $S_i$ $(1 \leq i \leq 5)$ are pairwise non-isomorphic NRTs in $\mathcal{T}(G,H)$. Since $S_k \neq T^{\prime}L^{\prime}$ ($3 \leq k \leq 5$) for any $T^{\prime} \in \mathcal{T}(UH,H)$ and $L^{\prime} \in \mathcal{T}(G,UH)$, $S_j \ncong S_k$ for $1 \leq j \leq 2$, $3 \leq k \leq 5$. Assume that $S_1 \cong S_2$. By Proposition \ref{p1'}, there exists $f \in Aut_HG$ such that $f(S_1)=S_2$. Since $U$ and $H$ are $Aut_HG$-invariant subgroups of $G$, $hu=f(u) \in U$, a contradiction. Thus, $S_1 \ncong S_2$. Similarly, $S_3 \ncong S_4$ and $S_3 \ncong S_5$.

Next, assume that $S_4 \cong S_5$. Then there exists $f \in Aut_HG$ such that $f(S_4)=S_5$. Since $U$ is an $Aut_HG$-invariant subgroup of $G$, $f(u)=u$. Now there exist $i \in \{0,1 \}$ and $k \in \{2, \cdots ,r-1 \}$ such that $f(u^il_k)=hl_j$ for some $j \in \{r-1,r\}$. Assume that $i=0$. Then as argued in the previous to the last above paragraph, there exists $h^{\prime} \in H \setminus U$ such that $f(ul_k)=uhl_j=h^{\prime}ul_j \in S_5$, a contradiction. Therefore, $f(ul_k)=hl_j$ for some $j \in \{r-1,r\}$. Then again there exists $h^{\prime} \in H \setminus U$ such that $f(l_k)=u^{-1}hl_j=h^{\prime}ul_j \in S_5$, a contradiction. Hence $S_4 \ncong S_5$. Thus each nontrivial characteristic subgroup $U$ contains $H$. 


Let $N$ be the smallest characteristic subgroup of $G$ containing $H$. Then $N$ is a direct product of isomorphic simple groups (\cite[3.3.15]{rob}). Thus there exists $K \in \mathcal{T}(N,H)$ which is a subgroup of $N$. By Lemma \ref{p4}(i),(iii) all $K \in \mathcal{T}(N,H)$ are subgroups of $N$. Thus by \cite[Lemma 2.4, p.1719]{viv1}, $[N:H]=2$. Hence $N$ is an elementary abelian 2-subgroup of $G$ containing $H$. 
By Lemma \ref{ln1}, $[G:N]=2$ or $[G:N]=3$. Now, as argued in the third and fourth paragraphs, we get a contradiction.


\end{proof}

\begin{corollary} \label{p5c}
Let $(G,H)$ be a minimal counterexample. Then $G$ is simple.
\end{corollary}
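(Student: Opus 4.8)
The plan is to combine Proposition~\ref{p5}, which asserts that a minimal counterexample $G$ is characteristically simple, with the indecomposability of $G$ established in Proposition~\ref{prn3}. Recall that a characteristically simple finite group is a direct product of isomorphic simple groups; that is, $G \cong T^k$ for some simple group $T$ and some $k \geq 1$ (this is the structure theorem \cite[3.3.15]{rob} already invoked in the proof of Proposition~\ref{p5}). The whole point of having both results in hand is that the two extreme cases in this dichotomy are each ruled out, leaving only the desired conclusion.

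First I would dispose of the abelian case. If $T$ is abelian, then $T \cong C_p$ for a prime $p$, so $G$ is an elementary abelian $p$-group. In particular $G$ is abelian, hence every subgroup is normal, so $H \trianglelefteq G$. But then $|\mathcal{I}(G,H)| = 1$ by \cite[Main Theorem, p.643]{rjpf}, contradicting that $(G,H)$ is a counterexample with $|\mathcal{I}(G,H)| = 4$. Therefore $T$ must be a finite non-abelian simple group, and $G \cong T^k$ is a direct product of copies of a non-abelian simple group.

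Next I would use indecomposability to force $k = 1$. By Proposition~\ref{prn3}, $G$ is indecomposable as a direct product. If $k \geq 2$, then $G \cong T^k = T \times T^{k-1}$ exhibits $G$ as a nontrivial direct product of two proper normal subgroups, contradicting indecomposability. Hence $k = 1$, so $G \cong T$ is simple, which is exactly the assertion of the corollary.

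I do not expect any serious obstacle here: the corollary is essentially a two-line bookkeeping consequence of the two propositions it follows, and all the genuine work has already been done in establishing characteristic simplicity (Proposition~\ref{p5}) and indecomposability (Proposition~\ref{prn3}). The only point requiring a moment's care is correctly quoting the structure of characteristically simple finite groups and observing that the abelian subcase collapses via normality of $H$; everything else is immediate.
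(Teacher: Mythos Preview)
Your proof is correct and follows essentially the same approach as the paper: combine characteristic simplicity (Proposition~\ref{p5}) with indecomposability (Proposition~\ref{prn3}) and the structure theorem \cite[3.3.15]{rob} for characteristically simple finite groups. Your separate disposal of the abelian case is harmless but unnecessary, since indecomposability alone already forces $k=1$ in $G\cong T^k$ regardless of whether $T$ is abelian; the paper's one-line proof simply omits this detour.
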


\begin{proof}
Since $G$ is indecomposable (Proposition \ref{prn2}) and characteristically simple (Proposition \ref{p5}), by \cite[3.3.15]{rob} $G$ is a simple group.
\end{proof}

\begin{proposition}\label{p10}
Let $(G,H)$ be a minimal counterexample. Let $S \in \mathcal{T}(G,H)$. Let $\mathcal{A}=\{ L\in \mathcal{T}(G,H)|L\cong S\}$.
Then $|\mathcal{A}| < \frac{m^{n-1}}{4}$, where $m$ and $n$ are the
order and the index of $H$ in $G$ respectively.
\end{proposition}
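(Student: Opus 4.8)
The plan is to leverage the three structural facts already secured for a minimal counterexample: $H$ is corefree (Proposition \ref{p1}(i)), every NRT generates $G$ (Proposition \ref{prn1}), and $G$ is simple (Corollary \ref{p5c}). First recall that $|\mathcal{T}(G,H)| = m^{n-1}$, since an NRT is obtained by choosing one of the $m$ elements in each of the $n-1$ nontrivial right cosets of $H$ (the coset $H$ is forced to contribute $1$). The point of the proposition is that it forces the whole theorem: the four isomorphism classes partition $\mathcal{T}(G,H)$, and if each has size $< m^{n-1}/4$ then $m^{n-1} = |\mathcal{T}(G,H)| < 4\cdot m^{n-1}/4 = m^{n-1}$, a contradiction. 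So establishing the displayed bound for an arbitrary class completes the proof of the Main Theorem.

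To bound $|\mathcal{A}|$, I would first show it is a single orbit of $Aut_HG$. Since $\langle S \rangle = G$ by Proposition \ref{prn1} and $H$ is corefree by Proposition \ref{p1}(i), Proposition \ref{p1'} applies and gives that $Aut_HG$ acts transitively on $\mathcal{A} = \{T \in \mathcal{T}(G,H) \mid T \cong S\}$. By the orbit--stabilizer relation $|\mathcal{A}| = [Aut_HG : (Aut_HG)_S] \le |Aut_HG|$, where $(Aut_HG)_S$ is the setwise stabilizer of $S$; one checks that restriction $\phi \mapsto \phi|_S$ identifies $(Aut_HG)_S$ with the right-loop automorphism group $Aut(S,\circ)$, but the crude estimate $|\mathcal{A}| \le |Aut_HG|$ is all that is needed.

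Next I would bound $|Aut_HG|$ in terms of $m$ and $n$. The group $G$ cannot be abelian simple (a cyclic group of prime order has no subgroup $H$ with $|\mathcal{I}(G,H)| = 4$), so $G$ is non-abelian simple; hence $Z(G) = \{1\}$, $Inn(G) \cong G$, and $|Aut(G)| = |G|\,|Out(G)| = mn\,|Out(G)|$. Therefore $|\mathcal{A}| \le |Aut_HG| \le |Aut(G)| = mn\,|Out(G)|$, and it suffices to verify $4n\,|Out(G)| < m^{n-2}$.

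The hard part is exactly this arithmetic inequality, and it is where the classification enters. Because $G$ is non-abelian simple and $Core_G(H) = \{1\}$, the action on the $n$ cosets embeds $G$ in $Sym(n)$; since $Sym(k)$ is solvable for $k \le 4$, no proper subgroup can have index $\le 4$, so $n \ge 5$ and $n-2 \ge 3$. With $m = |G|/n$ fixed, $m^{n-2}$ grows like a high power of $m$, while the CFSG-backed fact that outer automorphism groups of finite simple groups are small (uniformly bounded, and tiny relative to $|G|$) controls the left-hand side. I expect the genuine obstacle to be making $4n\,|Out(G)| < m^{n-2}$ uniform across the families of simple groups and disposing of the finitely many small configurations (small $n$ together with small $m$, e.g.\ the low-degree alternating and projective special linear groups), the most delicate of which have already been removed by Lemmas \ref{l1'}, \ref{l} and \ref{l1}. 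Once the inequality is established for all remaining $(G,H)$, the proposition follows, and with it the Main Theorem.
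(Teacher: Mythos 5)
Your proposal takes essentially the same route as the paper: the paper's own proof consists of invoking Proposition \ref{prn1} to get $\langle S\rangle = G$ and then citing the proof of \cite[Proposition 3.4, p. 2035]{viv2}, and what you spell out --- transitivity of $Aut_HG$ on $\mathcal{A}$ via Proposition \ref{p1'}, the bound $|\mathcal{A}| \le |Aut_HG| \le |Aut(G)| = mn\,|Out(G)|$ for the non-abelian simple group $G$ (Corollary \ref{p5c}), and the reduction to the inequality $4n\,|Out(G)| < m^{n-2}$ with $n \ge 5$, settled by classification-based bounds on outer automorphism groups --- is precisely that cited argument, adjusted from the constant $3$ to $4$. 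The one step you leave as an expectation (the uniform verification of $4n\,|Out(G)| < m^{n-2}$ across the simple-group families) is exactly the content the paper itself outsources to \cite{viv2}, so your outline matches the paper's proof in both strategy and level of detail.
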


\begin{proof} By Proposition \ref{prn1}, $\langle S \rangle = G$. Now the proof follows from the proof of \cite[Proposition 3.4, p. 2035]{viv2}. 

\end{proof}

\begin{proof}[Proof of the Main Theorem]

Let $\mathcal {A}_1, \mathcal {A}_2, \mathcal {A}_3$ and $\mathcal {A}_4$ denote the distinct isomorphism classes in $\mathcal{T}(G,H)$. Then by Proposition \ref{p10} , $m^{n-1}=\left|\mathcal{T}(G,H)\right|<4(\frac{m^{n-1}}{4})$, a contradiction.

\end{proof}

\end{document}